\documentclass[]{siamart1116}



\usepackage{lipsum}
\usepackage{amsfonts}
\usepackage{graphicx}
\usepackage{epstopdf}

\usepackage{setspace}
\usepackage{amssymb}
\usepackage{verbatim}
\usepackage{mathtools}
\usepackage{algorithm}
\usepackage{algpseudocode}


\ifpdf
\DeclareGraphicsExtensions{.eps,.pdf,.png,.jpg}
\else
\DeclareGraphicsExtensions{.eps}
\fi

\numberwithin{theorem}{section}

\newcommand{\TheTitle}{An adaptive memory method for accurate and efficient computation of the Caputo fractional derivative} 
\newcommand{\TheAuthors}{D. Yoon and D. You}

\headers{An adaptive memory method for the Caputo fractional derivative}{\TheAuthors}

\title{{\TheTitle}}
	
\author{
	Daegeun Yoon
	\and
	Donghyun You\thanks{Department of Mechanical Engineering, Pohang University of Science and Technology, 77 Cheongam-Ro, Nam-Gu, Pohang, Gyeongbuk 37673, South Korea (\email{dhyou@postech.ac.kr}).}	
}

\usepackage{amsopn}


\ifpdf
\hypersetup{
  pdftitle={\TheTitle},
  pdfauthor={\TheAuthors}
}
\fi



\begin{document}

\maketitle

\begin{abstract}
A fractional derivative is a temporally nonlocal operation which is computationally intensive due to inclusion of the accumulated contribution of function values at past times.
In order to lessen the computational load while maintaining the accuracy of the fractional derivative, a novel numerical method for the Caputo fractional derivative is proposed. 
The present adaptive memory method significantly reduces the requirement for computational memory for storing function values at past time points and also significantly improves the accuracy by calculating convolution weights to function values at past time points which can be non-uniformly distributed in time. The superior accuracy of the present method to the accuracy of the previously reported methods is identified by deriving numerical errors analytically. The sub-diffusion process of a time-fractional diffusion equation and the creeping response of a fractional viscoelastic model are simulated to demonstrate the accuracy as well as the computational efficiency of the present method.
\end{abstract}

\begin{keywords}
Fractional calculus, Caputo fractional derivative, Adaptive memory method
\end{keywords}


\section{Introduction}
Recently, the fractional calculus has often been employed to model or understand complex biological phenomena such as anomalous sub-diffusion under cell division~\cite{Selhuber-Unkel2009}, wave propagation thorough tissues~\cite{Sebaa2006}, and various mechanical behaviors of individual cells~\cite{Craiem2010}.

Especially, research has been conducted to understand the nonlinear deformability of a red blood cell (RBC), which is crucial in analyzing circulatory diseases~\cite{Suresh2005}. In the previous studies, the nonlinear viscoelasticity of an RBC membrane has been modeled using classical linear elements like springs and viscous dampers~\cite{Dao2003,Mills2004,Yoon2016}.
Linear combinations of classical elements lead to solutions with exponential variations to the corresponding constitutive equations. However, as Yoon {\it et al}.~\cite{Yoon2008} revealed in optical tweezer experiments, the dynamic stiffness of an RBC membrane is found to follow a power function of time instead of the conventionally considered exponential functions. 

Yoon~{\it et al.}~\cite{Yoon2008} claimed that the breakage of cross-linked proteins on an RBC membrane, severely changes mechanical properties of an RBC under deformation. In their stretching experiment, the dynamic stiffness of an RBC membrane was decreased over time and was fitted with a power function. Similarly, Puig-de-Morales-Marinkovi~{\it et al.}~\cite{Puig2007} attached ferromagnetic beads on RBC membranes and measured the complex modulus of an RBC by using a magnetic twisting cytometry. They also found that the loss modulus is fitted with a power function of the loading rate. Both experimental studies showed power-law behaviors of mechanical properties of an RBC in the time domain and in the frequency domain, which are not adequate to be modeled with the classical linear elements. Meanwhile, Craiem and Magin~\cite{Craiem2010} showed the possibility of the fractional calculus to model power-law behaviors of an RBC in time and frequency domains mathematically.

The fractional derivative is a nonlocal operation which has a convolution integral over time. Consequently, the fractional derivative is capable of prescribing the memory effects of a physical system. However, time points for numerical evaluation of the fractional derivative are accumulated as time elapses, thereby requiring exorbitant memory usage. Moreover, the convolution integral is evaluated on every time points, which takes a significant amount of computational cost. To resolve the problem, Podlubny~\cite{Podlubny1999} developed a simple and easy-to-implement numerical method known as the fixed memory method. The method is based on the characteristic of the kernel function in the convolution integral. The kernel function decays along a backward time direction so that old time points are less weighted. 
Utilizing the characteristic, the method stores recent time points within a given memory length $T$ from the current time. The method is found to be quite successful in achieving huge reduction of memory space and operation counts. Nevertheless, the abrupt elimination of past time points becomes a noticeable source of numerical error.

As an alternative, Ford and Simpson \cite{Ford2001} proposed a delicate numerical method, called the nested mesh method which considers whole time history. Instead of the abrupt elimination of past time points, this method adaptively allocates less time points on older time history, leading to substantial reduction of computational cost while providing better accuracy than that of the fixed memory method. Lubich and others \cite{Lopez-Fernandez2008,Lubich1988,Lubich2002} proposed a fractional linear multi-step method to approximate the convolution integral with reduced memory requirement. The method shows fast convergence to an exact solution obtained through inverse Laplace transform of a kernel function using contour integrals. However, the numerical procedure is found to become too complicated to deal with other various kernel functions. 

Recently, MacDonald~{\it et al.}~\cite{MacDonald2015} proposed an adaptive memory method, which manages time points distribution in a cost effective way. The underlying principle of the method is similar to that of the nested mesh method, but it achieves significant reduction not only in computational cost but also in memory requirement. However, the method is exclusive to the Gr{\"u}nwald-Lenikov fractional derivative which is adequate only for uniformly distributed time points. Consequently, the method is incapable of accurately approximating the convolution weights for adaptively distributed time points.

In order to overcome drawbacks of the aforementioned methods for the fractional derivative, in the present study, a novel numerical method for the Caputo fractional derivative, which achieves significant reduction in memory requirement and computational cost while significantly improves numerical accuracy, is proposed. As will be discussed in the following sections, the present numerical method is much easier to implement than the fractional multi-step methods. Improvement of numerical accuracy is obtained through the exact evaluation of the discretized convolution integral over non-uniformly distributed time points. The L1-norm error function is derived to characterize error behaviors of the present adaptive memory method with various computational conditions. As practical examples, the sub-diffusion process of a time fractional diffusion equation and the creep response of a fractional Kelvin-Voigt model are simulated to demonstrate the efficacy of the present method.

The present paper is organized as follows: the definition of the fractional derivative and the L1 discretization scheme are introduced first in Section 2. In Section~\ref{sec:sec3}, details of the present method and the computational aspects are presented. The accuracy of the present method is discussed in Section~\ref{sec:sec4} along with analytic and computational analyses of the method in Sections~\ref{sec:sec5} and~\ref{sec:sec6}. Concluding remarks are followed in Section~\ref{sec:sec7}.

\section{The Caputo fractional derivative and the L1 scheme}
\label{sec:sec2}
\subsection{Definition of the Caputo fractional derivative}
\label{sec:sec2.1}
The Cauchy formulation of repeated integrations of a function $f(t)$ can be easily extended to a real-number domain. This is the basic idea of fractional integrals, which often means the Riemann-Liouville fractional integral. Based on the concept of fractional integrals, there are several different definitions for fractional derivatives while only the Caputo fractional derivative is considered in the present study due to its practicality (see Remark~\ref{rmk:rmk1} and Remark~\ref{rmk:rmk2}).
\begin{definition}
	The Riemann-Liouville fractional integral of a function $f(t)$ for a fractional order $\alpha\in\mathbb{R}_+$, is defined as follows:
	\begin{equation}
	{J^\alpha f(t)=\frac{1}{\Gamma(\alpha)}\int_0^t \! \frac{f(\tau)}{(t-\tau)^{1-\alpha}} \, \mathrm{d}\tau},
	\label{eqn:eq1}
	\end{equation}
	where $t>0$ and $\Gamma$ is a Gamma function.
	From Eq.~(\ref{eqn:eq1}), the Caputo fractional derivative of a function $f(t)$ is defined as follows:
	\begin{equation}
	{D^\alpha f(t) = J^{z-\alpha} f^{(z)}(t)=\frac{1}{\Gamma(z-\alpha)}\int_0^t \! \frac{f^{(z)}(\tau)}{(t-\tau)^{1-z+\alpha}} \, \mathrm{d}\tau,}
	\label{eqn:eq2}
	\end{equation}
	where $z-1<\alpha<z$ and $z\in \mathbb{N}$.
	\label{df:df1}
\end{definition}

\begin{remark}
	The derivative of a constant is expected to be zero in classical calculus. However, this is not always true in the fractional derivative depending on the definition. Considering Eq.~(\ref{eqn:eq2}), the Caputo fractional derivative of a constant $f(t)=c$ is always zero, meanwhile, the Riemann-Liouville fractional derivative results in $ct^{-\alpha} / \Gamma(1-\alpha)$ \cite{Li2015}.
	\label{rmk:rmk1}
\end{remark}

\begin{theorem} 
	The Laplace transformation of the Riemann-Liouville fractional integral is given by
	\begin{equation}
	{\mathcal{L} \{ J^\alpha f(t) \}=s^{-\alpha}F(s),}
	\label{eqn:eq3}
	\end{equation}
	where $t>0$ and $f(t)$ is a causal function vanishing for $t\leq0$.
	\label{thm:thm1}
\end{theorem}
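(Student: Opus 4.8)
The plan is to recognize the Riemann-Liouville fractional integral in Eq.~(\ref{eqn:eq1}) as a convolution and then apply the convolution theorem for the Laplace transform. Writing the kernel as $g_\alpha(t) = t^{\alpha-1}/\Gamma(\alpha)$ for $t>0$, Eq.~(\ref{eqn:eq1}) becomes
\[
J^\alpha f(t) = \int_0^t g_\alpha(t-\tau)\, f(\tau)\,\mathrm{d}\tau = (g_\alpha * f)(t),
\]
so the result follows from the factorization $\mathcal{L}\{J^\alpha f\} = \mathcal{L}\{g_\alpha\}\,\mathcal{L}\{f\}$ once I establish that $\mathcal{L}\{g_\alpha\}(s) = s^{-\alpha}$.

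First I would compute the Laplace transform of the power-law kernel. By definition $\mathcal{L}\{g_\alpha\}(s) = \frac{1}{\Gamma(\alpha)}\int_0^\infty e^{-st} t^{\alpha-1}\,\mathrm{d}t$, and the substitution $u = st$ (taken for real $s>0$ and extended to $\mathrm{Re}(s)>0$ by analytic continuation) turns this into
\[
\mathcal{L}\{g_\alpha\}(s) = \frac{s^{-\alpha}}{\Gamma(\alpha)}\int_0^\infty e^{-u} u^{\alpha-1}\,\mathrm{d}u = \frac{s^{-\alpha}}{\Gamma(\alpha)}\,\Gamma(\alpha) = s^{-\alpha},
\]
where the middle integral is exactly the integral representation of the Gamma function. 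Next I would invoke the convolution theorem: since $f$ is causal and vanishes for $t\leq 0$, the finite convolution $(g_\alpha * f)(t)$ on $[0,t]$ agrees with the convolution on $\mathbb{R}$, so $\mathcal{L}\{g_\alpha * f\} = \mathcal{L}\{g_\alpha\}\,\mathcal{L}\{f\} = s^{-\alpha}F(s)$, which is Eq.~(\ref{eqn:eq3}).

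The main obstacle is analytic rather than algebraic: I must secure convergence and the validity of the convolution theorem. Convergence of $\mathcal{L}\{g_\alpha\}$ near the origin requires $\alpha>0$, which is guaranteed since $\alpha\in\mathbb{R}_+$, while convergence at infinity and the half-plane of analyticity require $\mathrm{Re}(s)>0$. For the convolution theorem I would assume $f$ is locally integrable and of exponential order, so that $F(s)$ exists on a common right half-plane and the interchange of the order of integration implicit in the theorem is justified by Fubini's theorem. The single genuinely new ingredient is the evaluation $\mathcal{L}\{t^{\alpha-1}\} = \Gamma(\alpha)\,s^{-\alpha}$, and pinning down its exact range of convergence is the step that deserves the most care.
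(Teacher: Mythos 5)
Your proposal is correct and follows essentially the same route as the paper's own proof: both express $J^\alpha f$ as the convolution $\Phi_\alpha * f$ with $\Phi_\alpha(t)=t^{\alpha-1}/\Gamma(\alpha)$ and invoke the Laplace convolution theorem to obtain $s^{-\alpha}F(s)$. The only difference is that you explicitly verify $\mathcal{L}\{t^{\alpha-1}\}=\Gamma(\alpha)s^{-\alpha}$ and the hypotheses for Fubini, details the paper simply asserts.
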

\begin{proof}
	After introducing a kernel function $\Phi_\alpha(t)$ in Eq.~(\ref{eqn:eq1}), the Riemann-Liouville fractional integral is represented by a convolution integral as follows~\cite{Loverro2004}: 
	\begin{equation}
	{J^\alpha f(t)=\Phi_\alpha(t) * f(t),}
	\label{eqn:eq4}
	\end{equation}
	where $\Phi_\alpha(t)=\frac{t^{\alpha-1}}{\Gamma(\alpha)}$.
	
	Using the linear property of the Laplace convolution, the Laplace transformed Riemann-Liouville fractional integral then becomes 
	\begin{equation}
	{\mathcal{L} \{ J^\alpha f(t) \}=\mathcal{L} \{ \Phi_\alpha(t) \} \mathcal{L} \{ f(t) \}=s^{-\alpha}F(s),}
	\label{eqn:eq5}
	\end{equation}
	where $F(s)$ is a Laplace transformed function of $f(t)$.
\end{proof}

\begin{corollary} 
Through Theorem~\ref{thm:thm1}, the Laplace transformation of the Caputo fractional derivative is expressed with integer order derivatives as follows:
\begin{equation}
{\mathcal{L} \{ J^{z-\alpha} f^{(z)}(t) \}= s^{\alpha-z}\mathcal{L} \{ f^{(z)}(t) \}=s^{\alpha}F(s)-\sum_{k=0}^{z-1}s^{\alpha-k-1}f^{(k)}(0).}
\label{eqn:eq6}
\end{equation}
\label{cor:cor1}
\end{corollary}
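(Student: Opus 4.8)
The plan is to chain two ingredients: the transform rule of Theorem~\ref{thm:thm1} applied to the fractional integral $J^{z-\alpha}$, and the classical Laplace differentiation formula for the integer-order derivative $f^{(z)}(t)$. Since $z-1<\alpha<z$, the order $z-\alpha$ lies in $(0,1)\subset\mathbb{R}_+$, so Theorem~\ref{thm:thm1} is directly applicable with $\alpha$ replaced by $z-\alpha$ and $f$ replaced by $f^{(z)}$. This immediately yields the first equality, $\mathcal{L}\{J^{z-\alpha} f^{(z)}(t)\}=s^{-(z-\alpha)}\mathcal{L}\{f^{(z)}(t)\}=s^{\alpha-z}\mathcal{L}\{f^{(z)}(t)\}$, with no further work required.

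For the second equality I would invoke the standard differentiation rule
\begin{equation}
\mathcal{L}\{f^{(z)}(t)\}=s^{z}F(s)-\sum_{k=0}^{z-1}s^{z-k-1}f^{(k)}(0),
\label{eqn:propplan}
\end{equation}
which itself follows by induction on $z$: the base case $\mathcal{L}\{f'(t)\}=sF(s)-f(0)$ is obtained by integrating $\int_0^\infty e^{-st}f'(t)\,\mathrm{d}t$ by parts, and the inductive step applies the same rule to $f^{(z-1)}$ and reindexes the sum. Substituting \eqref{eqn:propplan} into the first equality and distributing the prefactor $s^{\alpha-z}$ gives $s^{\alpha-z}s^{z}F(s)=s^{\alpha}F(s)$ for the leading term and $s^{\alpha-z}s^{z-k-1}=s^{\alpha-k-1}$ for each boundary term, producing exactly $s^{\alpha}F(s)-\sum_{k=0}^{z-1}s^{\alpha-k-1}f^{(k)}(0)$.

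There is no genuine obstacle here; the argument is essentially bookkeeping. The one point deserving care is the hypothesis of Theorem~\ref{thm:thm1}, which is stated for a causal function vanishing for $t\le 0$, whereas the boundary values $f^{(k)}(0)$ in the conclusion are generally nonzero. I would stress that Theorem~\ref{thm:thm1} is being used only for the convolution identity $J^{z-\alpha}g=\Phi_{z-\alpha}*g$ with $g=f^{(z)}$, whose Laplace transform factorizes as a product irrespective of the initial data of $f$; the nonzero initial values are then accounted for entirely through the differentiation rule \eqref{eqn:propplan}. Matching the exponents of $s$ across the two sums is the final check, and it reduces to the single identity $(\alpha-z)+(z-k-1)=\alpha-k-1$.
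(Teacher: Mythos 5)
Your proof is correct and follows exactly the route the paper intends: the corollary is stated as an immediate consequence of Theorem~\ref{thm:thm1} (applied with order $z-\alpha\in(0,1)$ to $f^{(z)}$) combined with the classical Laplace differentiation formula $\mathcal{L}\{f^{(z)}(t)\}=s^{z}F(s)-\sum_{k=0}^{z-1}s^{z-k-1}f^{(k)}(0)$, which is precisely your two-ingredient chain. Your additional remark about the causality hypothesis of Theorem~\ref{thm:thm1} is a careful touch the paper does not spell out, but it does not change the argument.
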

\begin{remark} 
Integer order derivatives ($f(0), \cdots, f^{(z-1)}(0)$) are used as initial conditions of fractional differential equations~\cite{Loverro2004}. 
\label{rmk:rmk2}
\end{remark}

\subsection{L1 scheme}
\label{sec:sec2.2}
A standard discretization scheme for the Caputo fractional derivative known as the L1 scheme is as follows~\cite{Oldham2006}:
\begin{definition}
	The Caputo fractional derivative discretized with the L1 scheme for $0<\alpha<1$, is as follows: 
	\begin{equation}
	{D^\alpha f(t) = \frac{1}{\Gamma(1-\alpha)}\sum_{k=0}^{n-1}\omega_t^k \frac{f^{k+1}-f^{k}}{t^{k+1}-t^k},}
	\label{eqn:eq7}
	\end{equation}
	\begin{equation*}
		{\omega_t^k=\int_{t^k}^{t^{k+1}} \! (t^n-\tau)^{-\alpha}\mathrm{d}\tau,}
	\end{equation*}	
	where $n$ denotes the current time step and $f^k=f(t^k)$ where $t^k$ is time at the $k$-th time point.
	\label{df:df2}
\end{definition}

\begin{theorem} 
	Through a von Neumann stability analysis, the L1 scheme for a time fractional diffusion equation leads to an unconditionally stable algorithm \cite{Yuste2012} as follows:
	\begin{equation}
	{|\delta^n| \leq |\delta^k|_{max}, \ }
	\label{eqn:eq8}
	\end{equation}
	where $|\delta^n|$ is the absolute value of $f(t^n)$ and $|\delta^k|_{max}$ is the maximum absolute value of $f(t^k)$ for $0\leqslant k < n$.
	\label{thm:thm2}
\end{theorem}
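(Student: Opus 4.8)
The plan is to carry out the von Neumann analysis explicitly for the implicit L1 discretization of the time-fractional diffusion equation $D^\alpha u = \kappa\,\partial_{xx}u$ with $0<\alpha<1$, and to reduce the stability question to a statement about the convexity of the convolution weights. First I would discretize in space by the standard three-point central difference and in time by the L1 scheme of Definition~\ref{df:df2} on a uniform grid $t^k = k\,\Delta t$, treating the diffusion term implicitly at level $n$. On the uniform grid the weights $\omega_t^k$ collapse to $\frac{(\Delta t)^{1-\alpha}}{1-\alpha}\,a_{n-1-k}$ with $a_\ell = (\ell+1)^{1-\alpha}-\ell^{1-\alpha}$; the structural facts I will record are $a_0 = 1$ and the strict monotone decay $a_0 > a_1 > \cdots > 0$, which follow from concavity of $x\mapsto x^{1-\alpha}$ for $0<\alpha<1$ and encode the decaying memory kernel.

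Next I would insert the Fourier mode $u_j^n = \delta^n e^{\mathrm{i} q j\Delta x}$. The central difference contributes the factor $-4\sin^2(q\Delta x/2)$, so the whole scheme reduces to a scalar recurrence for the amplitudes $\delta^n$ of the form
\begin{equation*}
(1+\beta)\,\delta^n = \sum_{m=1}^{n-1}\bigl(a_{n-m-1}-a_{n-m}\bigr)\,\delta^m + a_{n-1}\,\delta^0,
\end{equation*}
where $\beta = 4\,\mu\,\sin^2(q\Delta x/2)\ge 0$ and $\mu = \kappa\,(\Delta t)^\alpha\,\Gamma(2-\alpha)/(\Delta x)^2$. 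Obtaining this form is a matter of Abel summation (summation by parts) on $\sum_k a_{n-1-k}(\delta^{k+1}-\delta^k)$ followed by collecting the coefficient of each $\delta^m$.

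The crux of the argument, and the step I expect to require the most care, is to observe that the coefficients on the right-hand side form a convex combination. Monotone decay of the $a_\ell$ gives $a_{n-m-1}-a_{n-m} > 0$ and $a_{n-1} > 0$, so every coefficient is nonnegative; summing them telescopes to $(a_0 - a_{n-1}) + a_{n-1} = a_0 = 1$. With this in hand the triangle inequality yields $(1+\beta)\,|\delta^n| \le |\delta^k|_{\max}$, and since $\beta \ge 0$ forces $1+\beta \ge 1$, I conclude $|\delta^n|\le |\delta^k|_{\max}$. Because no restriction on $\Delta t$ or $\Delta x$ entered the estimate, stability is unconditional, which is precisely Eq.~(\ref{eqn:eq8}); iterating the bound by induction on $n$ gives $|\delta^n|\le|\delta^0|$ for all $n$. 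The one genuinely delicate point is the positivity and monotonicity of the weights, on which the entire convex-combination structure rests; everything else is bookkeeping.
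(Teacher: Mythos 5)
Your proof is correct. Note, however, that the paper itself offers no proof of this theorem: it is quoted as a known result with a citation to Yuste (2012), so there is nothing internal to compare against. What you have written is essentially the argument of that cited reference (and of the broader literature on L1-type schemes): reduce the uniform-grid L1 weights to $a_\ell = (\ell+1)^{1-\alpha}-\ell^{1-\alpha}$, use concavity of $x\mapsto x^{1-\alpha}$ to get $a_0=1$ and $a_0>a_1>\cdots>0$, insert the Fourier mode, sum by parts to exhibit the amplitude recurrence
\begin{equation*}
(1+\beta)\,\delta^n=\sum_{m=1}^{n-1}\bigl(a_{n-m-1}-a_{n-m}\bigr)\delta^m+a_{n-1}\,\delta^0,\qquad \beta\ge 0,
\end{equation*}
and observe that the right-hand coefficients are nonnegative and telescope to $a_0=1$, so the triangle inequality with $1+\beta\ge 1$ gives $|\delta^n|\le|\delta^k|_{\max}$ with no restriction on $\Delta t$ or $\Delta x$. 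All the individual steps check out, including the edge case $n=1$ (empty sum) and the inductive consequence $|\delta^n|\le|\delta^0|$. One cosmetic remark: the paper's statement loosely calls $|\delta^n|$ ``the absolute value of $f(t^n)$,'' whereas your reading of $\delta^n$ as the Fourier-mode amplitude is the correct interpretation for a von Neumann analysis; your proof thus also silently repairs an imprecision in the statement.
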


\section{Computational methods}
\label{sec:sec3}
\subsection{The previous fixed and adaptive memory methods}
\label{ssec:subsec3_1}
\begin{figure}
	\centering
	\includegraphics[width=0.6\linewidth]{./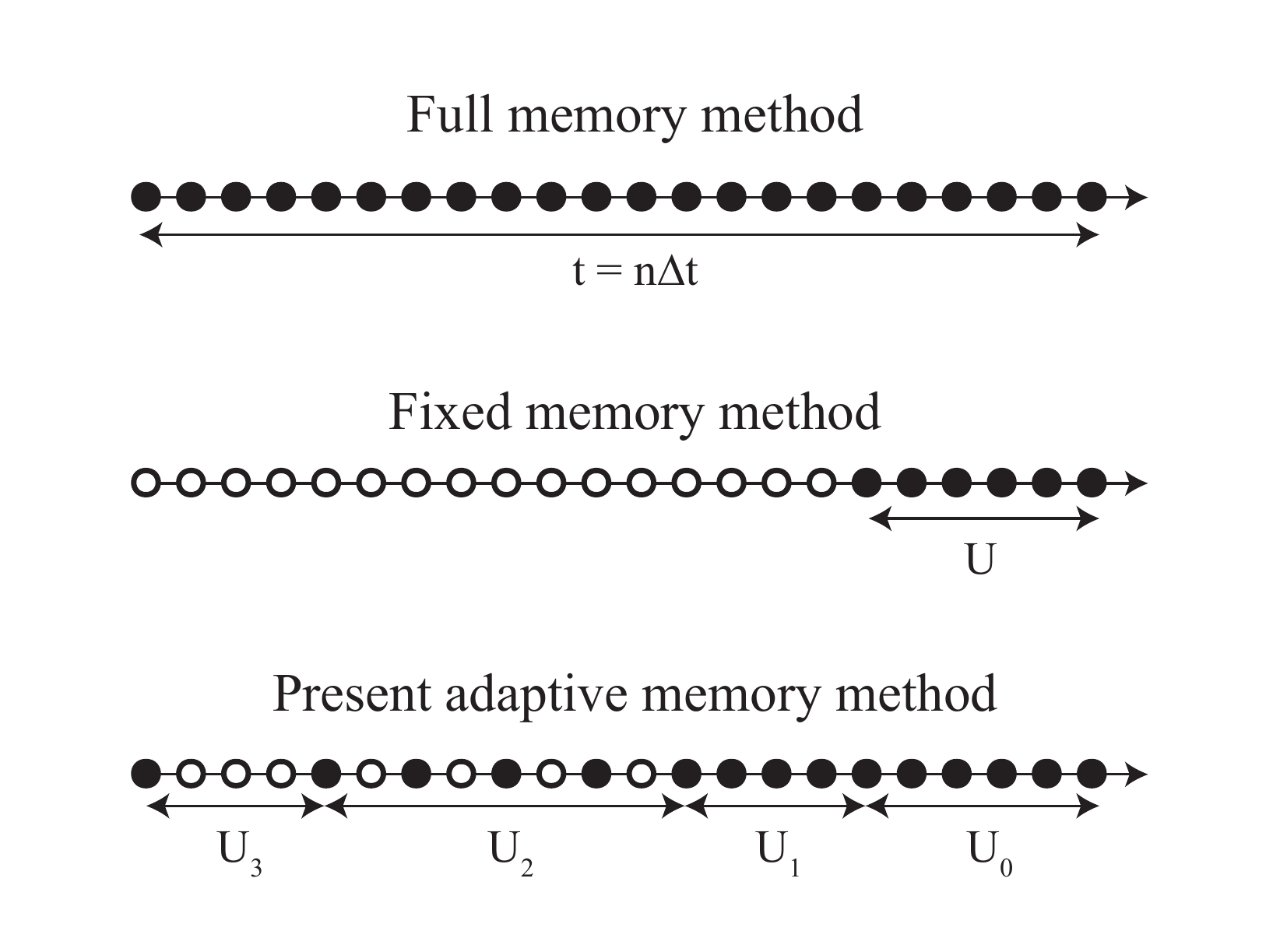}
	\caption{Distribution of time points of the full memory method (top), the fixed memory method (middle), and the present adaptive memory method (below). $\bullet$ and $\circ$ indicate stored time points and eliminated time points, respectively.}
	\label{fig:figure1}
\end{figure}
To calculate the Caputo fractional derivative, the previous time points are required in the convolution integral. This makes the fractional derivative as a nonlocal operation while requiring significant amounts of computational time and memory.
To address the problem, Podlubny \cite{Podlubny1999} proposed so called a fixed memory method. This is one of the simplest methods limiting the increase of computational cost and the memory requirement by setting time points in [0,t-T) are vanished, where $T$ is a memory length. The underlying principle of the fixed memory method is that function values relatively closer to the current time have greater contribution to the solution than function values at time points away from the current time~\cite{MacDonald2015}. For a given fixed memory length $T$, the fixed memory method is explained in Algorithm~\ref{algo:algo1} and in Fig.~\ref{fig:figure1}.
\begin{algorithm}
	\caption{Fixed memory method}
	\label{algo:algo1}
	\begin{algorithmic}
		\State Let $U$ have a subset $U_0$ which stores time points in $[t-T,t]$
		\State $U_0=\{\{f^{p}_0,t^{p}_0\},\cdots,\{f^{k}_0,t^{k}_0\},\cdots,\{f^{0}_0,t^{0}_0\}\}$
		\State Add a new time point at the current step $n$ to the subset $U_0$
		\State $U_0=U_0+\{f^{p+1}_0=f^n,t^{p+1}_0=t^n\}$
		\If{Time interval of $U_0 > T$}
		\State Remove the oldest time point $\{f^{0}_0,t^{0}_0\}$ 
		\EndIf
		\State Evaluate the fractional derivative according to the time sequence of stored time points in $U$ through Eq.~(\ref{eqn:eq9}).
	\end{algorithmic}
\end{algorithm}
\begin{equation}
{D^\alpha f(t) = \frac{1}{\Gamma(1-\alpha)}\sum_{\{f^k,t^k\}\in U}^{}\omega_t^k \frac{f^{k+1}-f^{k}}{t^{k+1}-t^{k}},}
\label{eqn:eq9}
\end{equation}
\begin{equation*}
	{\omega_t^k=\int_{t^k}^{t^{k+1}} \! (t^n-\tau)^{-\alpha}\mathrm{d}\tau.}
\end{equation*} 
The memory size required for the fixed memory method is fixed during the computation, which makes the method computationally attractive. 

However, the fixed memory method becomes inaccurate when function values vary rapidly in the truncated time interval $[0,t-T)$.
As an alternative to the fixed memory method but with additional computational cost and memory, Ford and Simpson~\cite{Ford2001} proposed a nested mesh method that gives a good approximation to an exact solution.

MacDonald {\it et al.}~\cite{MacDonald2015} proposed an adaptive memory method which allocates time-points using a power law through a linked-list algorithm. In the adaptive memory method, old time points which are not utilized in the current step integration are eliminated, thereby leading to reduction of total memory allocated for computation at $n$ time step from $\mathcal{O}(n)$ of the full memory method to $\mathcal{O}(\log_2n)$~\cite{MacDonald2015}. A notable drawback of the adaptive method of MacDonald {\it et al.}~\cite{MacDonald2015} is that the method is exclusive to the Gr{\"u}nwald-Lenikov fractional derivative which is only applicable to uniformly distributed time points. Consequently, the convolution integral over non-uniformly distributed time points is inaccurately weighted.

\subsection{The proposed new adaptive memory method}
\label{ssec:subsec3_2}
To overcome the drawbacks of the previous fixed and adaptive memory methods, a new adaptive memory method, which is based on the Caputo fractional derivative, is proposed in the present study. Unlike in the Gr{\"u}nwald-Lenikov fractional derivative, it is not necessary to be confined to uniformly distributed time points in the Caputo fractional derivative since the derivative contains a continuous convolution integral (see Definition~\ref{df:df2}). 
Therefore, in the present method, it is capable of calculating exact convolution weights even for time-points distributed using a power law, thereby leading to significant improvement in accuracy (details are discussed in Sections~\ref{sec:sec5} and~\ref{sec:sec6}). 
Some of old time points are eliminated based on a power-law distribution, and the rest of time points are grouped into subsets $U_l$ (see Fig.~\ref{fig:figure1} for a graphical illustration). For a given memory length $T$, step-by-step details of the present algorithm can be summarized as in Algorithm~\ref{algo:algo2}.
\begin{algorithm}
	\caption{The present new adaptive memory method}
	\label{algo:algo2}
	\begin{algorithmic}[0]
		\State Let $U$ have subsets $U_l$ which store time points as follows:
		\State $U=\{ U_0,\cdots,U_l,\cdots,U_L \}$, $U_l=\{\{f^p_l,t^p_l\},\cdots,\{f^0_l,t^0_l\}\}$
		\State Add a new time point at the current step $n$ to the subset $U_0$
		\State $U_0=U_0+\{f^{p+1}_0=f^n,t^{p+1}_0=t^n\}$
		\If{Time interval of $U_0 > T$}
		\State Move the oldest time point $\{f^{0}_0,t^{0}_0\}$ to $U_1$		
		\For{$l=1$ to $L$}
		\If{Time interval of $U_l>2^{l-1}T$}
		\State Eliminate the second oldest time point $\{f^{1}_l,t^{1}_l\}$ 
		\State Move the oldest time point $\{f^{0}_l,t^{0}_l\}$ to $U_{l+1}$
		\Else
		\State Exit Loop
		\EndIf
		\EndFor
		\EndIf
		\State Evaluate the fractional derivative according to the time sequence of stored time points in $U$ through Eq.~(\ref{eqn:eq9}).
	\end{algorithmic}
\end{algorithm}

The convolution summation in Eq.~(\ref{eqn:eq7}) needs to be evaluated from the initial step to the current step $n$. Supposed that the memory length $T=m\Delta t$, where $\Delta t$ is a time step size and $m\in\mathbb{N}$. In the full memory method, operation counts ($OC$) for the convolution summation from $t=0$ to $2^LT$, where $L\in\mathbb{N}$ is given, become ${OC}_{full\ memory\ method}=2^Lm(2^Lm+1)/2$ or approximately $\mathcal{O}((2^{L})^2)$. 
The fixed memory method results in considerable reduction of the operation counts to $OC_{fixed\ memory\ method}=m(m+1)/2+m^2(2^L-1)$ or $\mathcal{O}(2^L)$. 

Operation counts for the present adaptive memory method are approximated as follows:
\begin{equation}
{OC_{the\ present\ adaptive\ method}\approx m(m+1)/2+\sum_{l=1}^{L}2^{l-2}((2l+1)m^2+m)}.
\label{eqn:eq10}
\end{equation}

\begin{remark} 
	For a given $L$, operation counts for the present adaptive memory method are approximately $\mathcal{O}(L2^{L})$, which is similar to that of the nested mesh method by Ford and Simpson \cite{Ford2001}. $OC_{the\ present\ adaptive\ method}$ are determined as follows: in the time interval of $[0,T]$ in $U_0$, there are $m(m+1)/2$ convolution summations. In the interval of $(2^{l-1}T,2^{l}T]$ in $U_l$, there are $lm+1$ to $(l+1)m$ convolution summations with $2^{l-1}$ increments to account for eliminated time points. Therefore, operation counts for convolution summations become $2^{l-1}\{(lm+1)+(l+1)m\}m/2$, approximately. Eq.~(\ref{eqn:eq10}) is derived by summing these terms.
	\label{rmk:rmk3}
\end{remark}

\section{Accuracy}
\label{sec:sec4}
The L1-norm error function of the fixed memory method is expressed as follows \cite{Ford2001}:
\begin{equation}
{ Error= \left\vert \frac{1}{\Gamma(1-\alpha)}\int_{0}^{t-T}\frac{f'(\tau)}{(t-\tau)^\alpha}\mathrm{d}\tau \right\vert  \leqslant\frac{M}{\Gamma(2-\alpha)}(t^{1-\alpha}-T^{1-\alpha}),}
\label{eqn:eq11}
\end{equation}
where $M$ is the maximum absolute value of $f'(\tau)$ for $\tau\in[0,t]$. By increasing the memory length $T$, $Error$ becomes smaller, but at the same time, it loses benefits in operation counts and memory. According to Langlands and Henry~\cite{Langlands2005}, the order of accuracy of the L1 scheme is $\mathcal{O}(\Delta t^{2-\alpha})$. 
However, one can easily notice that the order of accuracy is degenerated to $\mathcal{O}(\Delta t^0)$ for the fixed memory method. Truncation of the past time points in the convolution integral in the fixed memory method severely degrades numerical accuracy.

In this section, the L1-norm error for the present adaptive memory method with the L1 scheme, is derived for the Caputo fractional derivative as conducted by Langlands and Henry~\cite{Langlands2005}. 
It is assumed that a function $f(t)$ can be expanded in a Taylor series around $t=0$ with an integral remainder term as follows:
\begin{equation}
{ f(t)=f(0)+tf^{'}(0)+\int_{0}^{t}\!(t-\tau)f^{''}(\tau) \ \mathrm{d}\tau. }
\label{eqn:eq12}
\end{equation}
Applying the fractional derivative operator with $0<\alpha<1$ to Eq.~(\ref{eqn:eq12}) leads to
\begin{equation}
{ \frac{\mathrm{d}^{\alpha} f(t)}{\mathrm{d}t^{\alpha}}=f(0)\frac{\mathrm{d}^{\alpha} 1}{\mathrm{d}t^{\alpha}}+f^{'}(0)\frac{\mathrm{d}^{\alpha} t}{\mathrm{d}t^{\alpha}}+\frac{\mathrm{d}^{\alpha}}{\mathrm{d}t^{\alpha}}\int_{0}^{t}\!(t-\tau)f^{''}(\tau) \ \mathrm{d}\tau }.
\label{eqn:eq13}
\end{equation}
By Definition~\ref{df:df1}, the exact expression of the fractional derivative becomes as follows: 
\begin{equation}
\frac{\mathrm{d}^{\alpha} f}{\mathrm{d}t^{\alpha}}=f^{'}(0)\frac{t^{1-\alpha}}{\Gamma(2-\alpha)}+\frac{1}{\Gamma(2-\alpha)}\int_{0}^{t}\!(t-\tau)^{1-\alpha}f^{''}(\tau) \ \mathrm{d}\tau,
\label{eqn:eq14}
\end{equation}
where the first term in Eq.~(\ref{eqn:eq13}) becomes zero as noted in Remark~\ref{rmk:rmk1}.

The accuracy of the L1 scheme is evaluated by comparing the above result with the result obtained by applying the L1 scheme to Eq.~(\ref{eqn:eq13}). From Definition~\ref{df:df2}, the fractional derivatives in Eq.~(\ref{eqn:eq13}) can be derived with the L1 scheme. The first term in Eq.~(\ref{eqn:eq13}) is easily determined to be zero. The expression of the L1 scheme operating on the second term is as follows:
\begin{equation}
{f^{'}(0)\left.{\frac{\mathrm{d}^{\alpha} t}{\mathrm{d}t^{\alpha}}}\right\vert_{L1}=\frac{f^{'}(0)}{\Gamma(1-\alpha)}\sum_{k=0}^{n-1}\omega_t^k}.
\label{eqn:eq15}	
\end{equation}
Using the below identity
\begin{equation}
{\sum_{k=0}^{n-1}\omega_t^k=\int_{0}^{t^n}(t^n-\tau)^{-\alpha}\mathrm{d}\tau=\frac{(t^n)^{1-\alpha}}{1-\alpha}},
\label{eqn:eq17}	
\end{equation}
Eq.~(\ref{eqn:eq15}) is simplified to
\begin{equation}
{f^{'}(0)\left.{\frac{\mathrm{d}^{\alpha} t}{\mathrm{d}t^{\alpha}}}\right\vert_{L1}=f^{'}(0)\frac{(t^n)^{1-\alpha}}{\Gamma(2-\alpha)}}.
\label{eqn:eq16}	
\end{equation}

Note that both numerical results of the first term and the second term are identical to the exact results of the fractional derivatives. Thus, any errors must arise from errors in the third term in Eq.~(\ref{eqn:eq13}). The third term with the L1 scheme is written as follows:
\begin{equation}
{\left.\frac{\mathrm{d}^{\alpha}}{\mathrm{d}t^{\alpha}}\int_{0}^{t} (t-\tau) \!f^{''}(\tau) \ \mathrm{d}\tau \right\vert_{L1}=\frac{1}{\Gamma(1-\alpha)}\sum_{k=0}^{n-1}\int_{t^k}^{t^{k+1}}\!(t^n-\tau)^{-\alpha}\ \mathrm{d}\tau \frac{u^{k+1}-u^{k}}{t^{k+1}-t^k}},
\label{eqn:eq18}	
\end{equation}
where
\begin{equation}
{u^k = \int_{0}^{t^k}\!(t^k-\tau)f^{''}(\tau) \ \mathrm{d}\tau }.
\label{eqn:eq19}	
\end{equation}
Evaluating the integration simplifies Eq.~(\ref{eqn:eq18}) to as follows:
\begin{equation}
{\frac{1}{\Gamma(2-\alpha)}\sum_{k=0}^{n-1}\{(t^n-t^k)^{1-\alpha}-(t^n-t^{k+1})^{1-\alpha}\} \frac{u^{k+1}-u^{k}}{t^{k+1}-t^k}}.
\label{eqn:eq20}	
\end{equation}
Note that $u^{k+1}-u^k$ has a form as follows:
\begin{equation}
\begin{split}
{u^{k+1}-u^k=\int_{0}^{t^{k+1}}\!(t^{k+1}-\tau)f^{''}(\tau) \ \mathrm{d}\tau-\int_{0}^{t^k}\!(t^k-\tau)f^{''}(\tau) \ \mathrm{d}\tau} \\
{=\int_{0}^{t^{k}}\!(t^{k+1}-t^k)f^{''}(\tau) \ \mathrm{d}\tau+\int_{t^k}^{t^{k+1}}\!(t^{k+1}-\tau)f^{''}(\tau) \ \mathrm{d}\tau}.
\label{eqn:eq21}
\end{split}	
\end{equation}
Substituting Eq.~(\ref{eqn:eq21}) into Eq.~(\ref{eqn:eq20}) leads to
\begin{equation}
\begin{split}
{\frac{1}{\Gamma(2-\alpha)}\sum_{k=0}^{n-1}\frac{(t^n-t^k)^{1-\alpha}-(t^n-t^{k+1})^{1-\alpha}}{{t^{k+1}-t^k}} \int_{t^k}^{t^{k+1}}\!(t^{k+1}-\tau)f^{''}(\tau) \ \mathrm{d}\tau}\\
{+\frac{1}{\Gamma(2-\alpha)}\sum_{k=0}^{n-1}\{(t^n-t^k)^{1-\alpha}-(t^n-t^{k+1})^{1-\alpha}\} \int_{0}^{t^{k}}\!f^{''}(\tau) \ \mathrm{d}\tau}.
\label{eqn:eq22}	
\end{split}
\end{equation}
The second term in Eq.~(\ref{eqn:eq22}) is zero for $k=0$ and is rewritten as follows:
\begin{equation}
{\frac{1}{\Gamma(2-\alpha)}\sum_{k=1}^{n-1}\{(t^n-t^k)^{1-\alpha}-(t^n-t^{k+1})^{1-\alpha}\} \sum_{q=0}^{k-1}\int_{t^q}^{t^{q+1}}\!f^{''}(\tau) \ \mathrm{d}\tau}.
\label{eqn:eq23}	
\end{equation}
The order of summations $k$ and $q$ in Eq.~(\ref{eqn:eq23}) is switched as
\begin{equation}
{\frac{1}{\Gamma(2-\alpha)}\sum_{q=0}^{n-2}\int_{t^q}^{t^{q+1}}\!f^{''}(\tau) \ \mathrm{d}\tau \sum_{k=1+q}^{n-1}\{(t^n-t^k)^{1-\alpha}-(t^n-t^{k+1})^{1-\alpha}\} }.
\label{eqn:eq24}	
\end{equation}
The above equation is simplified to
\begin{equation}
{\frac{1}{\Gamma(2-\alpha)}\sum_{k=0}^{n-1}\int_{t^k}^{t^{k+1}}\!f^{''}(\tau)  (t^n-t^{k+1})^{1-\alpha} \ \mathrm{d}\tau}.
\label{eqn:eq25}	
\end{equation}
Substituting Eq.~(\ref{eqn:eq25}) into the second term of Eq.~(\ref{eqn:eq22}) leads to
\begin{equation}
\begin{split}
\frac{1}{\Gamma(2-\alpha)}\sum_{k=0}^{n-1}\int_{t^k}^{t^{k+1}}\!f^{''}(\tau)  \frac{t^{k+1}-\tau}{t^{k+1}-t^k} \{  (t^n-t^k)^{1-\alpha}-(t^n-t^{k+1})^{1-\alpha} \} \\
+f^{''}(\tau)(t^n-t^{k+1})^{1-\alpha} \mathrm{d}\tau.
\end{split}
\label{eqn:eq26}	
\end{equation}
The absolute value obtained by subtracting the exact results from the numerical results, is defined as the L1-norm error of the present adaptive memory method. Thus, the error function is finally given as follows:
\begin{equation}
\begin{split}
&\left| \frac{\mathrm{d}^{\alpha} f}{\mathrm{d}t^{\alpha}}-\left.{\frac{\mathrm{d}^{\alpha} f}{\mathrm{d}t^{\alpha}}}\right\vert_{L1}  \right|=
\left|  \frac{1}{\Gamma(2-\alpha)}\sum_{k=0}^{n-1}\int_{t^k}^{t^{k+1}}\! f^{''}(\tau) (t^n-\tau)^{1-\alpha} \right.\\ 
&\left. -f^{''}(\tau)\frac{t^{k+1}-\tau}{t^{k+1}-t^k}  \{(t^n-t^k)^{1-\alpha}-(t^n-t^{k+1})^{1-\alpha} \} -f^{''}(\tau)(t^n-t^{k+1})^{1-\alpha} \mathrm{d}\tau \right|.
\end{split}	
\label{eqn:eq27}
\end{equation}
By denoting the maximum absolute value of the second derivative as
\begin{equation}
M=\mathrm{max} |f^{''}(\tau)|, \ \tau\in[0,t^n],
\label{eqn:eq28}
\end{equation}
the L1-norm error function becomes
\begin{equation}
\begin{split}
\left| \frac{\mathrm{d}^{\alpha} f}{\mathrm{d}t^{\alpha}}-\left.{\frac{\mathrm{d}^{\alpha} f}{\mathrm{d}t^{\alpha}}}\right\vert_{L1}  \right| \leqslant
\frac{M }{\Gamma(2-\alpha)}\sum_{k=0}^{n-1}\left| \int_{t^k}^{t^{k+1}}\! (t^n-\tau)^{1-\alpha} \right.\\  
\left. -\frac{t^{k+1}-\tau}{t^{k+1}-t^k} \{(t^n-t^k)^{1-\alpha}-(t^n-t^{k+1})^{1-\alpha} \} -(t^n-t^{k+1})^{1-\alpha} \mathrm{d}\tau \right|.
\end{split}	
\label{eqn:eq29}
\end{equation}

\begin{theorem} The integral term in Eq.~(\ref{eqn:eq29}) is always positive for $0<\alpha<1$.
	\begin{equation}
	\begin{split}
	& \int_{t^k}^{t^{k+1}}\! (t^n-\tau)^{1-\alpha}  -\frac{t^{k+1}-\tau}{t^{k+1}-t^k} \{(t^n-t^k)^{1-\alpha}-(t^n-t^{k+1})^{1-\alpha} \} \\
	& -(t^n-t^{k+1})^{1-\alpha} \mathrm{d}\tau >0
	\end{split}	
	\label{eqn:eq31}
	\end{equation}
	\label{thm:thm3}
\end{theorem}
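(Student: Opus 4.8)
The plan is to collapse the integrand to a manifestly nonnegative quantity by a linear change of variables and then to invoke the concavity of the power function. First I would substitute $s = t^n - \tau$, which maps the interval $[t^k, t^{k+1}]$ onto $[\,t^n - t^{k+1},\, t^n - t^k\,]$ and sends $t^{k+1} - \tau$ to $s - (t^n - t^{k+1})$. Writing $a = t^n - t^{k+1} \geqslant 0$ and $b = t^n - t^k > 0$ (so that $b > a$ and $b - a = t^{k+1} - t^k$), and setting $\beta = 1 - \alpha \in (0,1)$, the integral in Eq.~(\ref{eqn:eq31}) becomes $\int_a^b g(s)\,\mathrm{d}s$ with
\[
g(s) = s^{\beta} - a^{\beta} - \frac{s-a}{b-a}\bigl(b^{\beta} - a^{\beta}\bigr).
\]

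Next I would observe that $g$ is precisely the difference between $s^{\beta}$ and the secant (chord) interpolating the two endpoints $(a, a^{\beta})$ and $(b, b^{\beta})$; in particular a direct check gives $g(a) = g(b) = 0$. Since $0 < \beta < 1$, the power function satisfies $\tfrac{\mathrm{d}^2}{\mathrm{d}s^2}\,s^{\beta} = \beta(\beta-1)s^{\beta-2} < 0$ for every $s > 0$, so $s^{\beta}$ is \emph{strictly} concave on $[a,b]$; as $g$ differs from it only by a linear term, $g$ is strictly concave as well. Strict concavity together with $g(a) = g(b) = 0$ forces $g(s) > 0$ for all $s \in (a,b)$. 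The integral of a continuous function that is strictly positive on an interval of positive length is itself strictly positive, whence $\int_a^b g(s)\,\mathrm{d}s > 0$, which is the asserted inequality.

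The only point requiring a moment's care is the boundary case $k = n-1$, for which $a = 0$. There the chord joins the origin $(0,0)$ to $(b, b^{\beta})$, so that $g(s) = s^{\beta} - b^{\beta-1}s$; since $s^{\beta}$ is continuous and concave on all of $[0,\infty)$, the argument applies on the closed interval $[0,b]$ without modification, and the inequality $g(s) > 0$ on $(0,b)$ reduces to $s^{\beta-1} > b^{\beta-1}$, which holds for $0 < s < b$ because $\beta - 1 < 0$ makes $x \mapsto x^{\beta-1}$ decreasing. I expect the main (and essentially the only) obstacle to be stating cleanly the geometric fact that a strictly concave function lies strictly above each of its chords on the interior; once that is in hand, the change of variables and the positivity of the integral of a positive continuous integrand are routine.
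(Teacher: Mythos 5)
Your proof is correct and takes essentially the same route as the paper's: both reduce the integral to the gap between the strictly concave power function $(t^n-\tau)^{1-\alpha}$ (your $s^{\beta}$ after the substitution $s=t^n-\tau$) and its chord over the interval, and conclude positivity from concavity. The paper phrases this as the trapezoidal-rule error, asserting that the trapezoid underestimates the integral of a concave function, whereas you establish the pointwise inequality $g(s)>0$ on the open interval and integrate, additionally treating the endpoint case $t^{k+1}=t^n$ (where $a=0$ and $f''$ blows up at the boundary) explicitly --- a minor but welcome extra care that the paper omits.
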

\begin{proof}
	Eq.~(\ref{eqn:eq31}) can be recast with a fractional power function $f(\tau)=(t^n-\tau)^{1-\alpha}$ as follows:
	\begin{equation*}
		\int_{t^k}^{t^{k+1}}\! f(\tau) \mathrm{d}\tau  -\frac{1}{2} \{f(t^k)+f(t^{k+1})\}(t^{k+1}-t^{k})>0.
	\end{equation*}
	The left side of the above inequality is the error of the trapezoidal rule obtained from the integration of the fractional power function $f(\tau)=(t^n-\tau)^{1-\alpha}$ over $[t^{k},t^{k+1}]$. The inequality is satisfied if and only if when the fractional power function is concave on its domain $t^{k}<\tau<t^{k+1}$.
	The second derivative of the fractional power function 
	\begin{equation*}
		f''(\tau)=-\alpha(1-\alpha)(t^n-\tau)^{-\alpha-1}
	\end{equation*}
	is always negative on its domain $t^{k}<\tau<t^{k+1}$, which means the function is concave and satisfies
	\begin{equation*}
		f((1-\beta)t^k+\beta t^{k+1})>(1-\beta)f(t^k)+\beta f(t^{k+1}),~0<\beta<1.
	\end{equation*}
	Therefore, the integral term in Eq.~(\ref{eqn:eq29}) is always positive for $0<\alpha<1$.
\end{proof}

According to Theorem 3, the integral term in Eq.~(\ref{eqn:eq29}) is evaluated as follows:
\begin{equation}
\begin{split}
\left| \frac{\mathrm{d}^{\alpha} f}{\mathrm{d}t^{\alpha}}-\left.{\frac{\mathrm{d}^{\alpha} f}{\mathrm{d}t^{\alpha}}}\right\vert_{L1}  \right| \leqslant \frac{M }{2\Gamma(3-\alpha)}\sum_{k=0}^{n-1} (t^n-t^k)^{1-\alpha}\{2(t^n-t^{k+1})+(t^{k+1}-t^k)\alpha\} \\- (t^n-t^{k+1})^{1-\alpha}\{2(t^n-t^k)-(t^{k+1}-t^k)\alpha \}.
\end{split}	
\label{eqn:eq30}
\end{equation}

Now the present adaptive memory method is applied to Eq.~(\ref{eqn:eq30}). Suppose that the subset $U_0$ stores $m+1$ time points and the subset $U_l$ stores $m$ time points. $T=m\Delta t$ is given with an uniform time step size of $\Delta t$.
\begin{equation}
\begin{split}
& t^n=T+(T+2T+4T+\cdots+2^{L-1}T),\\
& t^{k}_0=(T+2T+\cdots+2^{L-1}T)+k\Delta t, \\
& t^{k}_l=(2^{l}T+\cdots+2^{L-1}T)+k2^{l-1}\Delta t,
\end{split}	
\label{eqn:eq32}
\end{equation}
where $t^n$, $t^{k}_0$ and $t^{k}_l$ are current time, time of the $k$-th time point in $U_0$, and time of the $k$-th time point in $U_l$, respectively. Then, Eq.~(\ref{eqn:eq30}) is rewritten as summations of each subset as follows:
\begin{equation}
\begin{split}
&\left| \frac{\mathrm{d}^{\alpha} f}{\mathrm{d}t^{\alpha}}-\left.{\frac{\mathrm{d}^{\alpha} f}{\mathrm{d}t^{\alpha}}}\right\vert_{L1}  \right| \leqslant \frac{M }{2\Gamma(3-\alpha)} \sum_{k=0}^{m-1} [(T-k\Delta t)^{1-\alpha}\{2(T-k\Delta t)+\Delta t\alpha\}\\
&-\{T-(k+1)\Delta t\}^{1-\alpha}\{2(T-k\Delta t)-\Delta t\alpha \}]\\
&+\frac{M }{2\Gamma(3-\alpha)}\sum_{l=1}^{L}\sum_{k=0}^{m-1} [(2^lT-k2^{l-1}\Delta t)^{1-\alpha}\{2(2^lT-k2^{l-1}\Delta t)+2^{l-1}\Delta t\alpha\}\\
&-\{2^lT-(k+1)2^{l-1}\Delta t\}^{1-\alpha}\{2(2^lT-k2^{l-1}\Delta t)-2^{l-1}\Delta t\alpha \}],
\end{split}	
\label{eqn:eq33}
\end{equation}
where the first summation is for the error of subset $U_0$ and the second summation is for the error of subset $U_l$ where $l=1$ to $L$. By introducing $A(m,\alpha)$ and $B(m,\alpha)$, Eq.~(\ref{eqn:eq33}) is simplified as follows:
\begin{equation}
\left| \frac{\mathrm{d}^{\alpha} f}{\mathrm{d}t^{\alpha}}-\left.{\frac{\mathrm{d}^{\alpha} f}{\mathrm{d}t^{\alpha}}}\right\vert_{L1}  \right| \leqslant \frac{M}{2\Gamma(3-\alpha)} \left\lbrace \Delta t^{2-\alpha}A(m,\alpha)+\sum_{l=1}^{L}(2^{l-1}\Delta t)^{2-\alpha}B(m,\alpha)  \right\rbrace,
\label{eqn:eq34}
\end{equation}
where
\begin{equation*}
	\begin{split}
		& A(m,\alpha)= \\
		& \sum_{k=0}^{m-1}\left[ (m-k)^{1-\alpha}\left\lbrace 2(m-k-1)+\alpha\right\rbrace -(m-k-1)^{1-\alpha}\left\lbrace 2(m-k)-\alpha\right\rbrace \right],
		\\
		& B(m,\alpha)=\\
		&\sum_{k=0}^{m-1}\left[ (2m-k)^{1-\alpha}\left\lbrace 2(2m-k-1)+\alpha\right\rbrace -(2m-k-1)^{1-\alpha}\left\lbrace 2(2m-k)-\alpha\right\rbrace \right].
	\end{split}	
\end{equation*}
It immediately follows from Eq.~(\ref{eqn:eq34}) that $A(m,0)=0$ and $B(m,0)=0$ so that the L1 scheme is an identity operator. Also, $A(m,1)=1$ and $B(m,1)=0$ so that the scheme recovers a first derivative with $\mathcal{O}(\Delta t)$. 

\begin{figure}
	\centering
	\includegraphics[width=0.45\linewidth]{./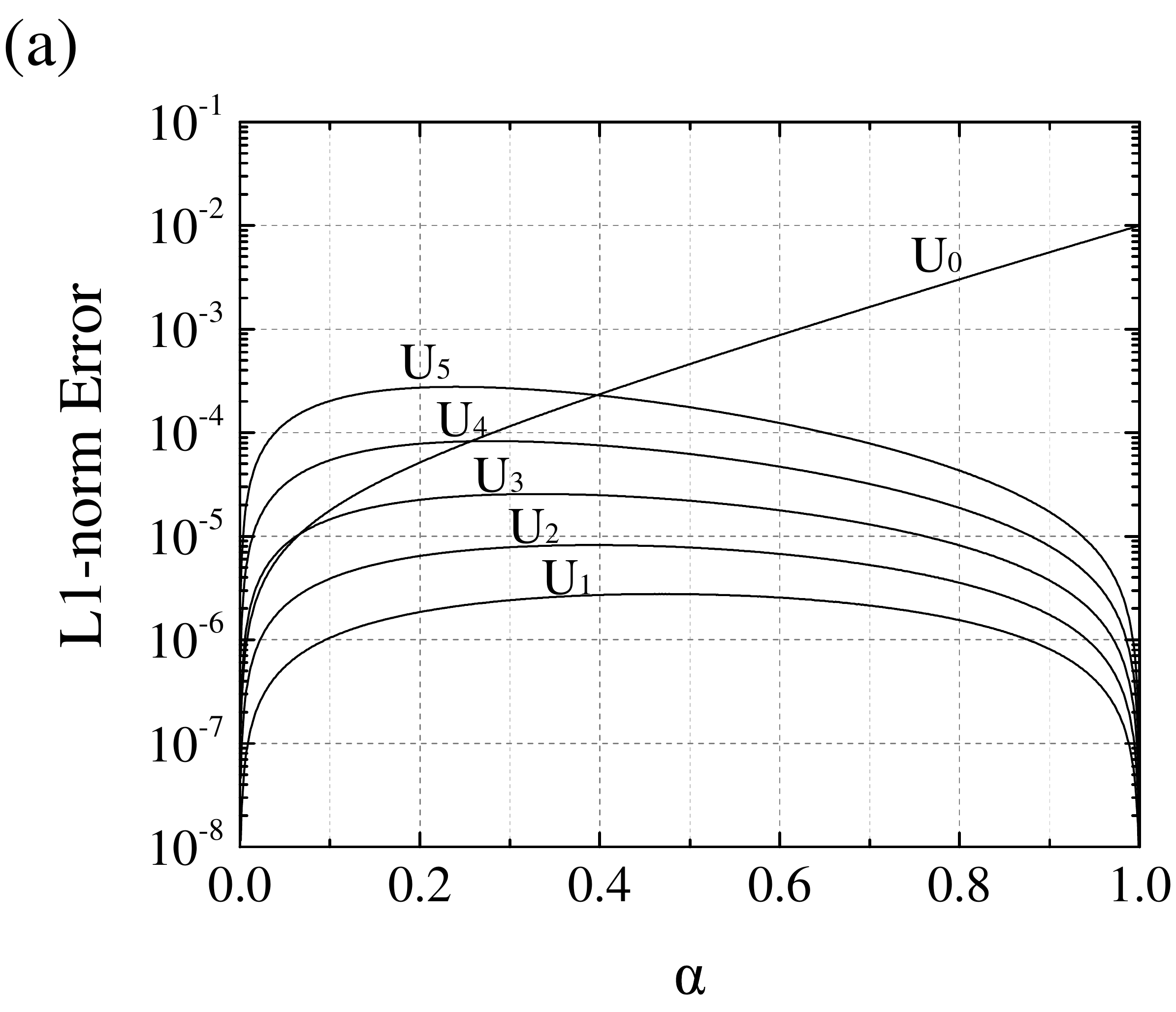}
	\includegraphics[width=0.45\linewidth]{./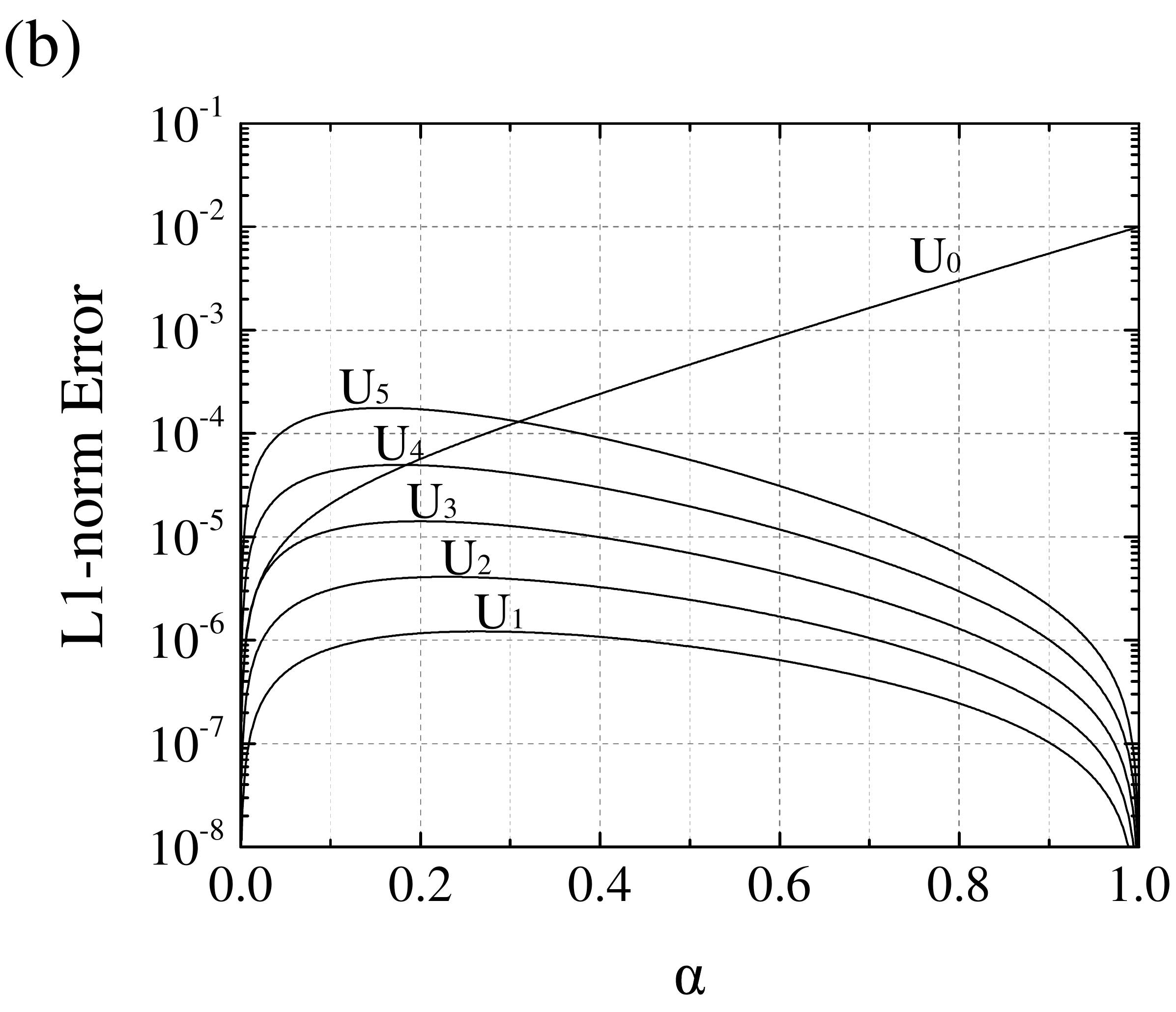}	
	\caption{L1-norm error of subset $U_l$ as a function of $\alpha$ for (a) $T=1$ and (b) $T=10$.}
	\label{fig:figure2}
\end{figure}
\begin{figure}
	\centering
	\includegraphics[width=0.45\linewidth]{./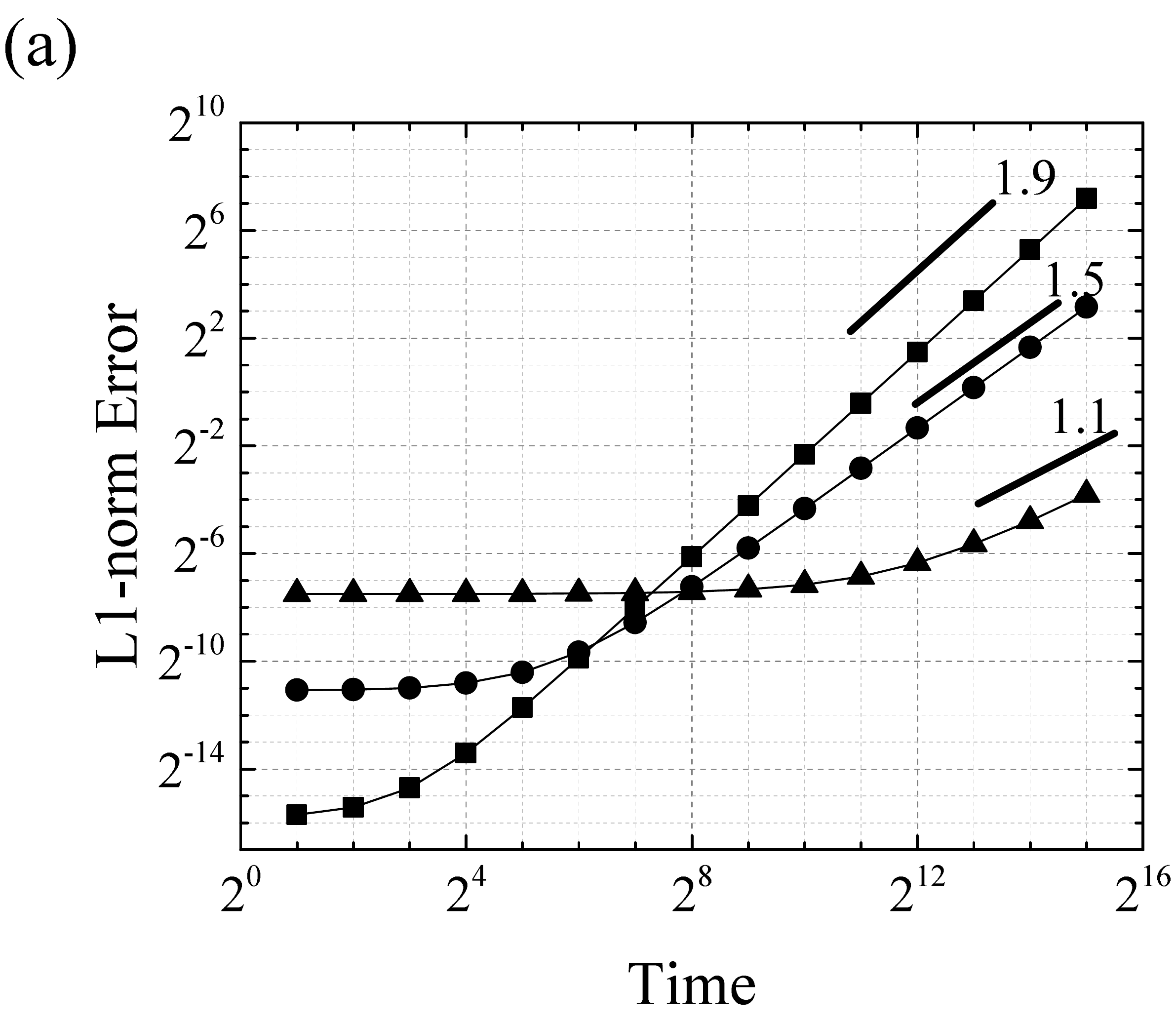}
	\includegraphics[width=0.45\linewidth]{./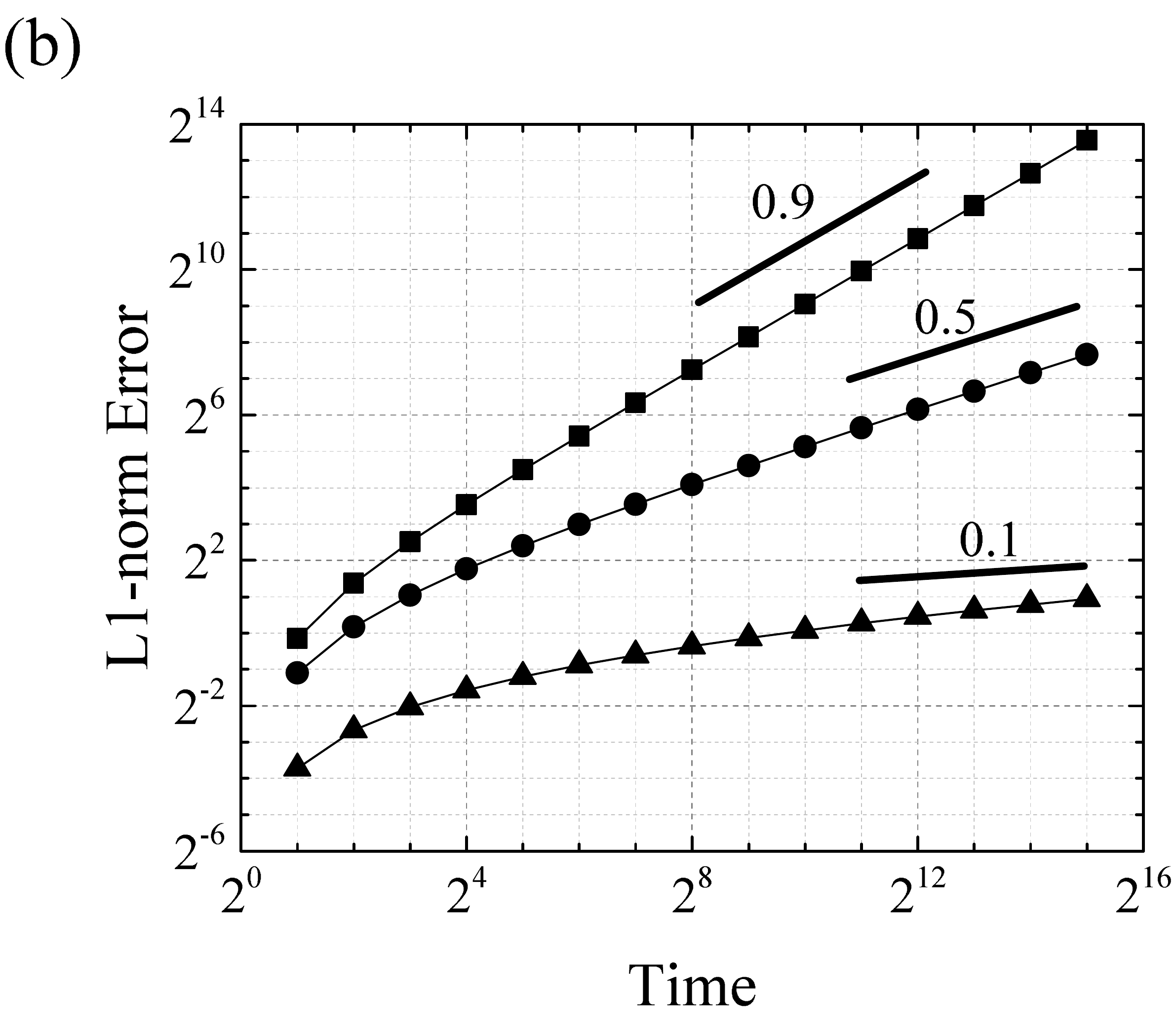}	
	\caption{L1-norm errors of computational results as a function of time for different $\alpha$ obtained using (a) the present adaptive memory method and (b) the fixed memory method. {\small{$\blacksquare$}}, $\alpha=0.1$; {\large{$\bullet$}}, $\alpha=0.5$; {\small{$\blacktriangle$}}, $\alpha=0.9$.}
	\label{fig:figure3}
\end{figure}
To investigate further, the error behavior of the present adaptive memory method is examined for a function $f(t)=t^2$ as a simple test of the error estimates because its second derivative is a constant, which makes clear the contribution to error by the present method itself not by the function values $f^{''}(\tau)$ (see Eq.~(\ref{eqn:eq27})). For the same reason, a function $f(t)=t$ is taken for the fixed memory method (see Eq.~(\ref{eqn:eq11})). The time step size $\Delta t$ is $0.01$. Fig.~\ref{fig:figure2} shows the L1-norm error of each subset according to $\alpha$ for $T=1$, and $10$. The error of subset $U_0$ gradually increases when $\alpha$ approaches to $1$. Whereas the error of subset $U_l$ increases after $\alpha=0$ and then decreases to zero when $\alpha$ approaches to $1$, which means the fractional derivative becomes a local operator when $\alpha=1$. Also, the error of subset $U_l$ increases to $2^{2-\alpha}$ times along the subset index $l$ according to Eq.~(\ref{eqn:eq34}). Therefore, in early times, the error of subset $U_0$ dominates the total error, but in later the error of subset $U_L$ dominates the total error. Fig.~\ref{fig:figure3}(a) shows the L1-norm error of the present method as a function of time for $T=1$. The error gradually increases to $2-\alpha$ order in terms of time. Fig.~\ref{fig:figure3}(b) shows the L1-norm error as a function of time for the fixed memory method with $T=1$. The error increases to $1-\alpha$ order in terms of time (see Eq.~(\ref{eqn:eq11})).

\begin{figure}
	\centering
	\includegraphics[width=0.45\linewidth]{./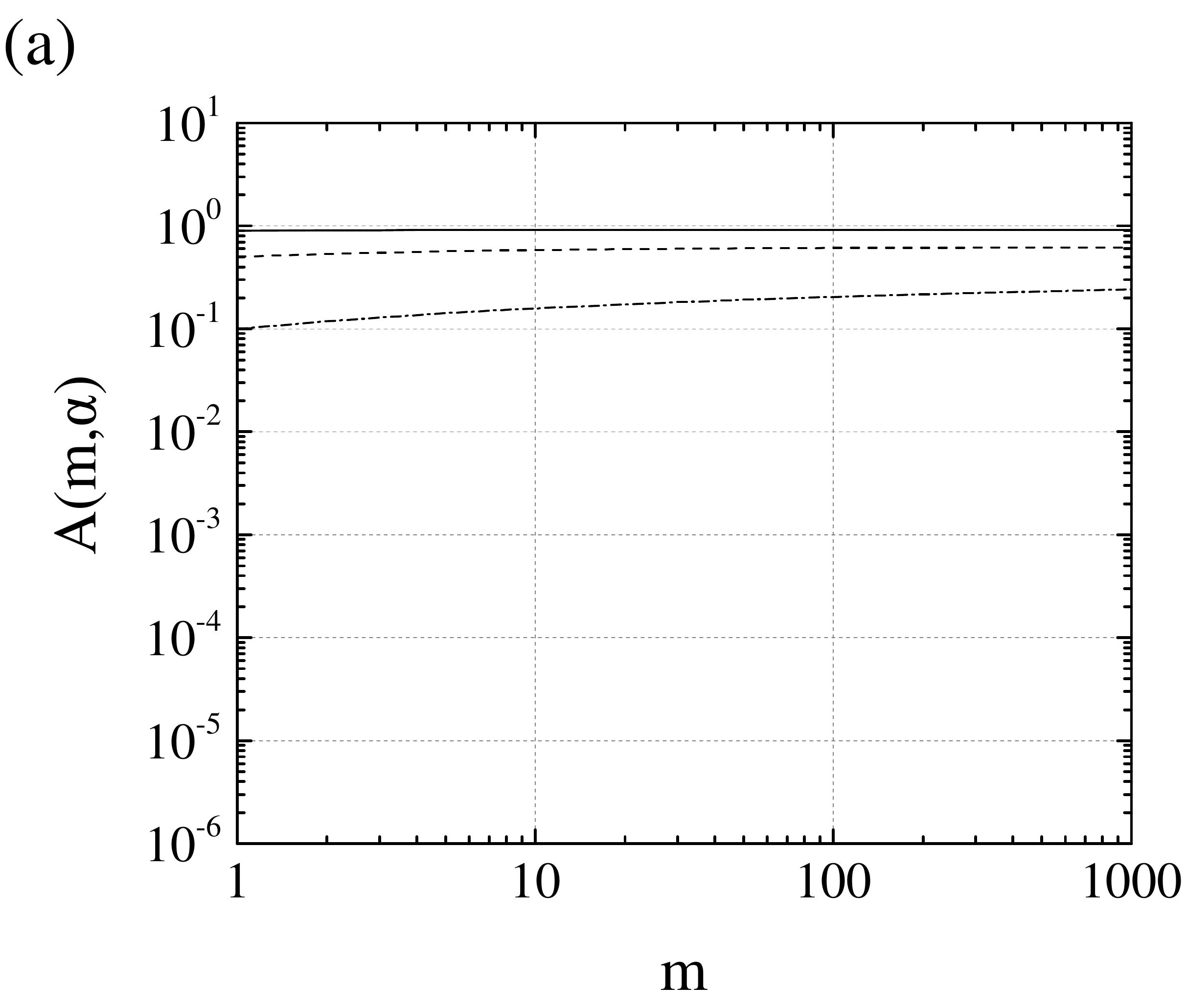}
	\includegraphics[width=0.45\linewidth]{./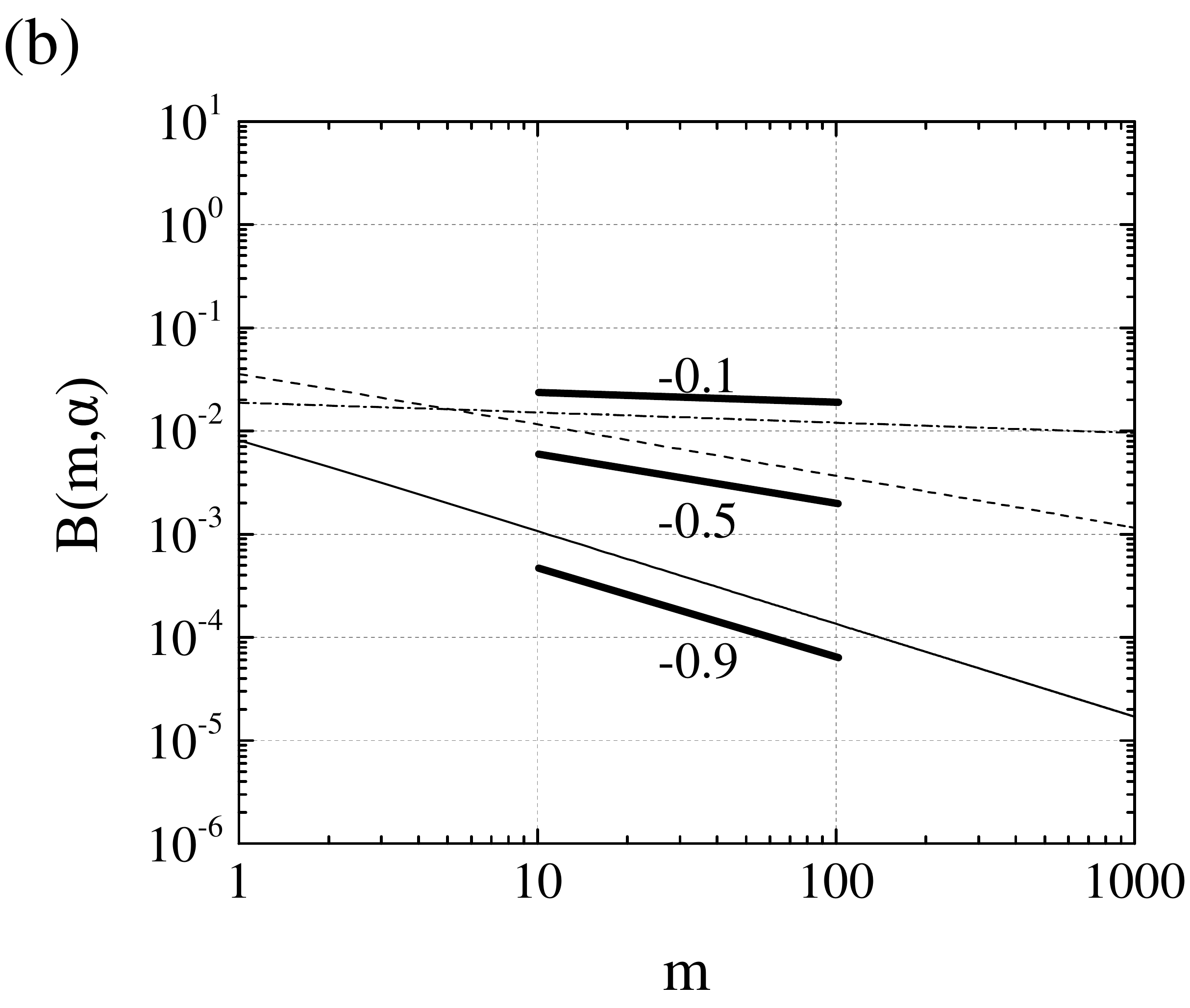}
	\caption{(a) $A(m,\alpha)$ and (b) $B(m,\alpha)$ as a function of $m$. {--~$\cdot$~--}, $\alpha=0.1$; {--~--~--}, $\alpha=0.5$; {------}, $\alpha=0.9$. }  
	\label{fig:figure4}
\end{figure}
Adjustment of $\Delta t$ is a general approach for error reduction by reducing length between time points in numerical methods. Eq.~(\ref{eqn:eq34}) makes clear that the L1-norm error of each subset is the order of $2-\alpha$ in terms of the length between time points ({\it e.g.}, $\Delta t$ in $U_0$ and $2^{l-1}\Delta t$ in $U_l$). To reduce $\Delta t$ with a given memory length $T$, $m$ is also need to be adjusted by the relation $T=m\Delta t$. In Eq.~(\ref{eqn:eq34}), the L1-norm error is not only a function of $\Delta t$ and $\alpha$ but also a function of $m$ through $A(m,\alpha)$ and $B(m,\alpha)$. Fig.~\ref{fig:figure4} shows values of $A(m,\alpha)$ and $B(m,\alpha)$ along $m$ with $\alpha=0.1,~0.5$, and $0.9$. From Fig.~\ref{fig:figure4}(a), there is an in-direct correlation between $A(m,\alpha)$ and $m$. Meanwhile, Fig.~\ref{fig:figure4}(b) shows that $B(m,\alpha)$ decreases to $-\alpha$ order in terms of $m$. This relation is also derived by a Taylor series expansion of $B(m,\alpha)$ in Appendix~\ref{app:app1}.
Thus, Eq.~(\ref{eqn:eq34}) is rewritten as follows:
\begin{equation}
\left| \frac{\mathrm{d}^{\alpha} f}{\mathrm{d}t^{\alpha}}-\left.{\frac{\mathrm{d}^{\alpha} f}{\mathrm{d}t^{\alpha}}}\right\vert_{L1}  \right| \leqslant \frac{M}{2\Gamma(3-\alpha)} \left\lbrace \Delta t^{2-\alpha}A(m,\alpha)+c(\alpha)T^{-\alpha}\sum_{l=1}^{L}(2^{l-1})^{2-\alpha}\Delta t^2  \right\rbrace,
\label{eqn:eq35}
\end{equation}
where $c(\alpha)$ is a proportional function of $B(m,\alpha)$. Now, the second term in Eq.~(\ref{eqn:eq35}) becomes $\mathcal{O}(\Delta t^{2})$. Thus, the order of accuracy changes from $\mathcal{O}(\Delta t^{2-\alpha})$ of the first term to $\mathcal{O}(\Delta t^{2})$ of the second term as time elapses. Fig.~\ref{fig:figure5}(a) shows that the L1-norm error decreases to $\mathcal{O}(\Delta t^{2-\alpha})$ at $t=2^5$, while Fig.~\ref{fig:figure5}(b) shows that the L1-norm error decreases to $\mathcal{O}(\Delta t^{2})$ at $t=2^{15}$. Fig.~\ref{fig:figure5}(c) shows the zeroth order of accuracy for the fixed memory method at $t=2^{10}$. As derived in Eq.~(\ref{eqn:eq11}), the error function of the fixed memory method is independent of $\Delta t$ when $T$ is given.
\begin{figure}
	\centering
	\includegraphics[width=0.327\linewidth]{./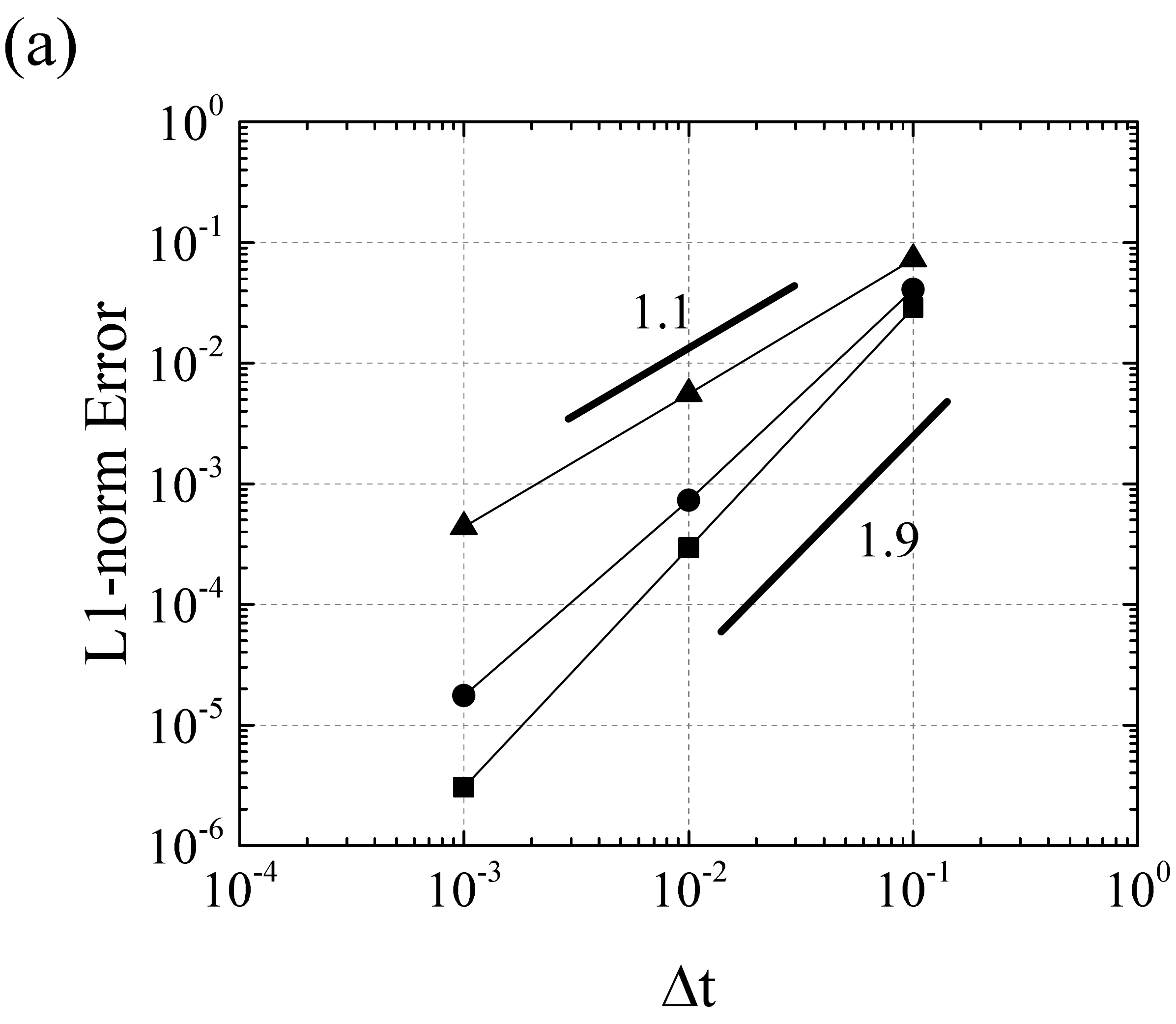}
	\includegraphics[width=0.327\linewidth]{./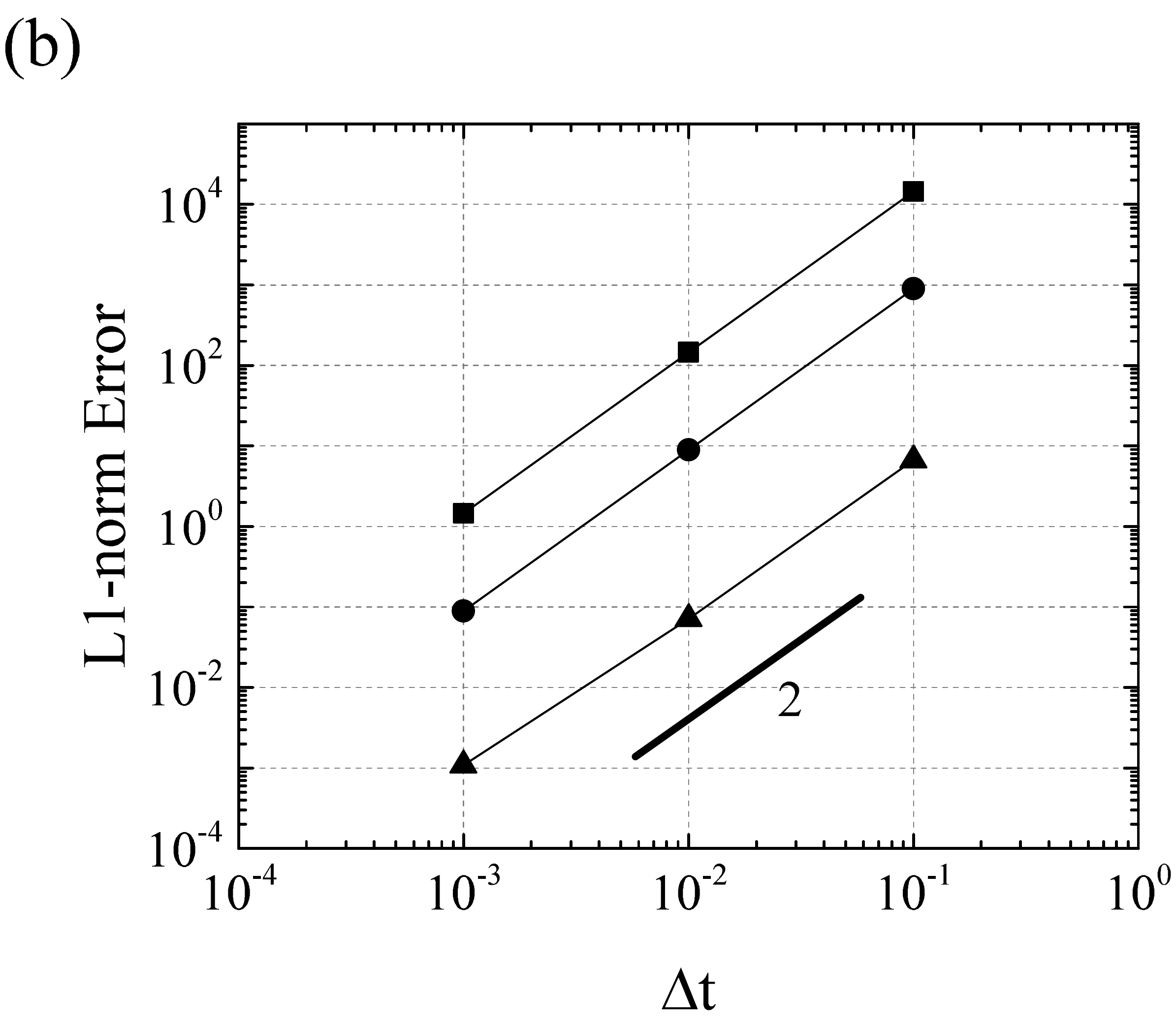}
	\includegraphics[width=0.327\linewidth]{./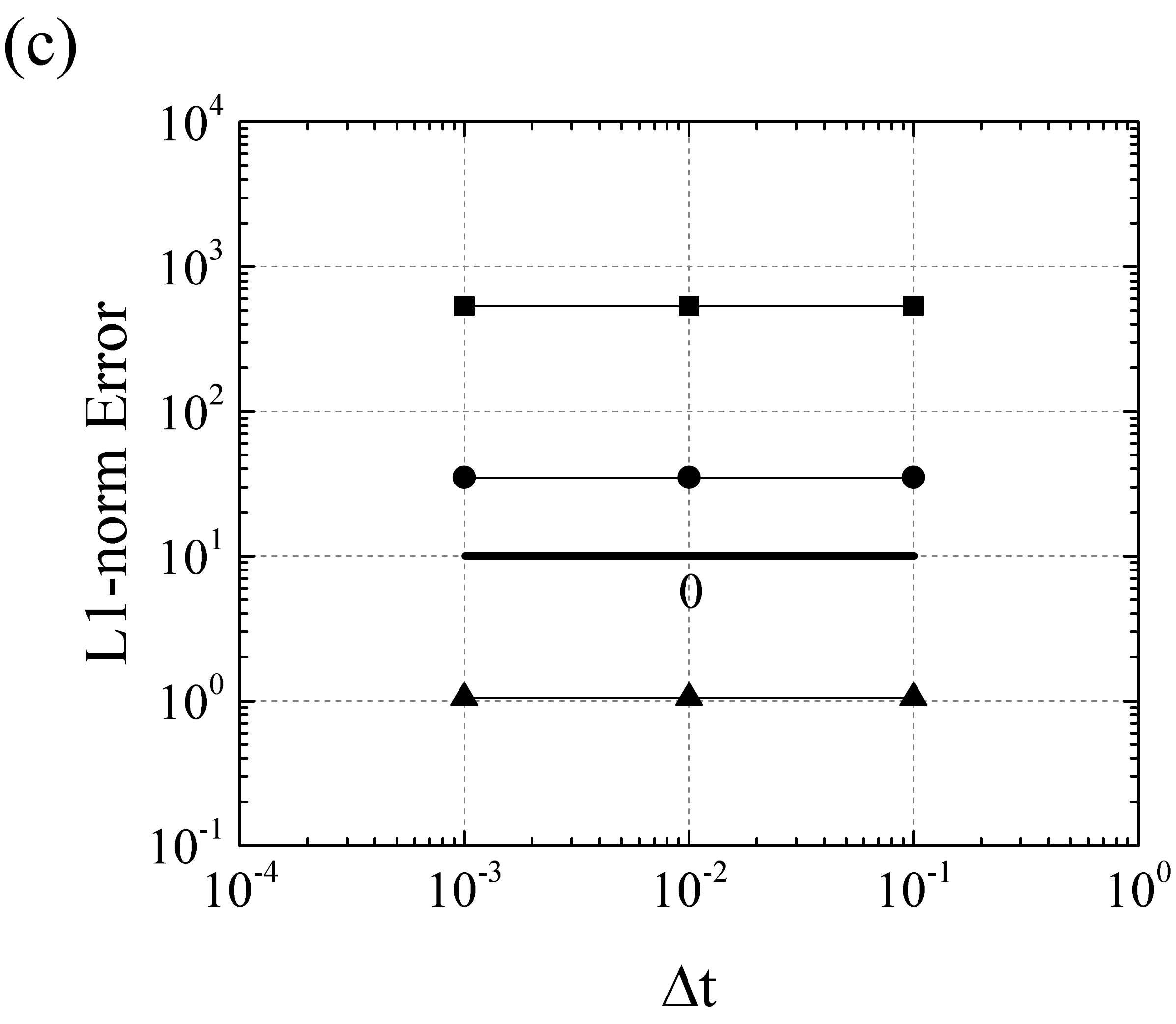}
	\caption{Plots of the order of accuracy in terms of $\Delta t$ for (a) the present adaptive memory method at $t=2^5$, (b) the present adaptive memory method at $t=2^{15}$ and (c) the fixed memory method at $t=2^{10}$. {\small{$\blacksquare$}}, $\alpha=0.1$; {\large{$\bullet$}}, $\alpha=0.5$; {\small{$\blacktriangle$}}, $\alpha=0.9$.}
	\label{fig:figure5}
\end{figure}

\section{Time-fractional diffusion equation}
\label{sec:sec5}
In this section, the accuracy and the computational cost of the present adaptive memory method are examined on a practical problem. Anomalous diffusion is a nonlinear phenomenon usually observed in diffusion process within living cells or porous media. Unlike standard diffusion, the anomalous diffusion phenomenon is represented by time-fractional diffusion with a fractional order $\alpha$. Depending on the range of a fractional order $\alpha$, it results in sub-diffusion for $0<\alpha<1$, standard diffusion for $\alpha=1$, and super-diffusion for $\alpha>1$: 
\begin{equation}
\frac{\partial^\alpha f(x,t)}{\partial t^\alpha}=\mu\frac{\partial^2 f(x,t)}{\partial x^2},
\label{eqn:eq36}
\end{equation}
where diffusion coefficient $\mu$ has a unit of $distance^2/time^{\alpha}$.

Consider, for example, a time-fractional sub-diffusion equation 
\begin{equation}
\frac{\partial^\alpha f(x,t)}{\partial t^\alpha}=\mu\frac{\partial^2 f(x,t)}{\partial x^2},~0<x<L,~ t>0,
\label{eqn:eq37}
\end{equation}
subjects to initial and Dirichlet boundary conditions,
\begin{equation}
f(x,0)=\sin(\pi x/L),~f(0,t)=f(L,t)=0.
\label{eqn:eq38}
\end{equation}
The analytic solution of Eq.~(\ref{eqn:eq37}) can be easily derived by using the methods of separation of variables and the Laplace transformation in Corollary~\ref{cor:cor1}.
Assume a solution for Eq.~(\ref{eqn:eq37}) in the form as follows:
\begin{equation}
f(x,t)=X(x)T(t).
\label{eqn:eq39}
\end{equation}
Substitution of Eq.~(\ref{eqn:eq39}) into Eq.~(\ref{eqn:eq37}) gives
\begin{equation}
XT^{(\alpha)}=\mu X^{(2)}T,
\label{eqn:eq40}
\end{equation}
where superscriptions represent differentiations with respect to variables $x$ and $t$. After being divided by $\mu X(x)T(t)$ and set equal to a constant $-\lambda^2$ for finding a nontrivial solution, it becomes
\begin{equation}
\frac{T^{(\alpha)}}{\mu T}=\frac{X^{(2)}}{X}=-\lambda^2.
\label{eqn:eq41}
\end{equation}
By separating each variable,
\begin{equation}
X^{(2)}+\lambda^2 X=0,
\label{eqn:eq42}
\end{equation}
and
\begin{equation}
T^{(\alpha)}+\mu\lambda^2 T=0.
\label{eqn:eq43}
\end{equation}
The general solution of Eq.~(\ref{eqn:eq42}) is given as follows:
\begin{equation}
X(x)=a\sin{\lambda x}+b\cos{\lambda x}.
\label{eqn:eq44}
\end{equation}
To satisfy the boundary conditions, $b=0$ where $x=0$ and $\lambda=n\pi/L,~n\geq1$ where $x=L$. Consequently, the corresponding solutions become
\begin{equation}
X_n(x)=a_n\sin{\frac{n\pi x}{L} },~n\geq 1.
\label{eqn:eq45}
\end{equation}
Also, the general solution of Eq.~(\ref{eqn:eq43}) is obtained by using the Laplace transformation. From Corollary~\ref{cor:cor1}, Eq.~(\ref{eqn:eq43}) is transformed as follows:
\begin{equation}
s^\alpha \mathcal{L}\{T\}(s) -s^{\alpha-1}T(0)+\mu\lambda^2\mathcal{L}\{T\}(s)=0.
\label{eqn:eq46}
\end{equation}
Then, it becomes
\begin{equation}
\mathcal{L}\{T\}(s) = T(0)\frac{s^{\alpha-1}}{s^\alpha+\mu\lambda^2}.
\label{eqn:eq47}
\end{equation}
By applying inverse Laplace transformation \cite{Mathai2008},
\begin{equation}
\mathcal{L}^{-1}\left(\frac{s^{\alpha-1}}{s^\alpha+\gamma}\right)=E_{\alpha,1}(-\gamma t^\alpha),
\label{eqn:eq48}
\end{equation}
a general solution for $T(t)$ is determined as follows:
\begin{equation}
T(t)=T(0)E_{\alpha,1}(-\mu \lambda^2 t^\alpha),
\label{eqn:eq49}
\end{equation}
where Mittag-Leffler function $E_{\alpha,1}$ is defined by an infinite sum of
\begin{equation}
E_{\alpha,1}(z)=\sum_{k=0}^{\infty}\frac{z^k}{\Gamma(\alpha k +1)}.
\label{eqn:eq50}
\end{equation}
Substituting $\lambda=n\pi/L$ into Eq.~(\ref{eqn:eq49}) leads to
\begin{equation}
T_n(t)=T(0)E_{\alpha,1}(-\mu (n\pi/L)^2 t^\alpha),~n\geq 1.
\label{eqn:eq51}
\end{equation}
Thus, the following sequence of solutions are obtained,
\begin{equation}
f_n(x,t)=X_n(x)T_n(t)=c_n\sin\frac{n\pi x}{L}E_{\alpha,1}(-\mu (n\pi/L)^2 t^\alpha),~n\geq 1.
\label{eqn:eq52}
\end{equation}
By taking linear combinations of $f_n(x,t)$, $f(x,t)$ is represented as follows:
\begin{equation}
f(x,t)=\sum_{n=1}^{\infty}f_n(x,t)=\sum_{n=1}^{\infty}c_n\sin\frac{n\pi x}{L}E_{\alpha,1}(-\mu (n\pi/L)^2 t^\alpha).
\label{eqn:eq53}
\end{equation}
By the initial condition of $f(x,0)=\sin(\pi x/L)$, $c_1=1$ and $c_n=0$, $n>1$. Finally, the analytic solution of Eq.~(\ref{eqn:eq37}) is determined as follows:
\begin{equation}
f(x,t)=\sin\frac{\pi x}{L}E_{\alpha,1}(-\mu (\pi/L)^2 t^\alpha).
\label{eqn:eq54}
\end{equation}
Note that Eq.~(\ref{eqn:eq54}) recovers the analytic solution of a standard diffusion equation when $\alpha=1$.

The time-fractional diffusion equation is numerically solved to assess the efficacy of the fixed memory method, the adaptive memory method of MacDonald~\textit{et al.}, and the present adaptive memory method. Eq.~(\ref{eqn:eq37}) is discretized by the implicit L1 scheme in time and the central difference scheme in space:
\begin{equation}
\frac{1}{\Gamma(1-\alpha)}\sum_{k=0}^{n}\omega_t^k \frac{f_i^{k+1}-f_i^{k}}{\Delta t^k}=\mu\frac{f_{i+1}^{n+1}-2f_{i}^{n+1}+f_{i-1}^{n+1}}{\Delta x^2}.
\label{eqn:eq55}
\end{equation}
To build a tridiagonal matrix, Eq.~(\ref{eqn:eq55}) can be rearranged as follows:
\begin{equation}
\begin{split}
&-\mu\frac{\Delta t^n}{\Delta x^2}f_{i+1}^{n+1}+\left(\frac{1}{\Gamma(1-\alpha)}\omega_t^n+2\mu\frac{\Delta t^n}{\Delta x^2}\right)f_{i}^{n+1}-\mu\frac{\Delta t^n}{\Delta x^2}f_{i-1}^{n+1}=\\
&\frac{1}{\Gamma(1-\alpha)}\omega_t^n f_{i}^{n}-\frac{\Delta t^n}{\Gamma(1-\alpha)}\sum_{k=0}^{n-1}\omega_t^k \frac{f_i^{k+1}-f_i^{k}}{\Delta t^k}.\\
\end{split}
\label{eqn:eq56}
\end{equation}
This tridiagonal matrix is solved by the Thomas algorithm. Note that $\Delta t^k$ between $t^{k+1}$ and $t^k$ is uniform for the full and fixed memory methods but it can be non-uniform for the present adaptive memory method. 

In case of the adaptive memory method of MacDonald~\textit{et al.} based on the Gr{\"u}wald-Lenikov derivative, Eq.~(\ref{eqn:eq37}) is discretized by the implicit Gr{\"u}wald-Lenikov formula in time and the central difference scheme in space as follows:
\begin{equation}
\frac{1}{\Delta t^{\alpha}}\sum_{k=0}^{n}\omega_n^k(f_i^{k+1}-f_i^0)=\mu\frac{f_{i+1}^{n+1}-2f_{i}^{n+1}+f_{i-1}^{n+1}}{\Delta x^2},
\label{eqn:eq57}
\end{equation}
where
\begin{equation}
w_n^k=(-1)^{n-k}\left(\begin{array}{c}\alpha\\ n-k\end{array}\right)=(-1)^{n-k}\frac{\Gamma(\alpha+1)}{(n-k)!\Gamma(\alpha-n+k+1)}.
\label{eqn:eq58}
\end{equation}
Note that $\omega_n^k$ is scaled by the adaptive power-law algorithm of MacDonald~\textit{et al.} to compensate skipped time points as follows:
\begin{equation}
W_n^k=w_n^k \left( \frac{t^{k+1}-t^{k}}{\Delta t} \right).
\label{eqn:eq59}
\end{equation}
To build a tridiagonal matrix, Eq.~(\ref{eqn:eq57}) can be rearranged as follows:
\begin{equation}
-\mu\frac{\Delta t^{\alpha}}{\Delta x^2}f_{i+1}^{n+1}+\left(W_n^n+2\mu\frac{\Delta t^{\alpha}}{\Delta x^2} \right)f_{i}^{n+1}-\mu\frac{\Delta t^{\alpha}}{\Delta x^2}f_{i-1}^{n+1}=W_n^{n}f_i^0-\sum_{k=0}^{n-1}W_n^k(f_i^{k+1}-f_i^0).
\label{eqn:eq60}
\end{equation}
This tridiagonal matrix is also solved by the Thomas algorithm.

\begin{figure}
	\centering
	\includegraphics[width=0.48\linewidth]{./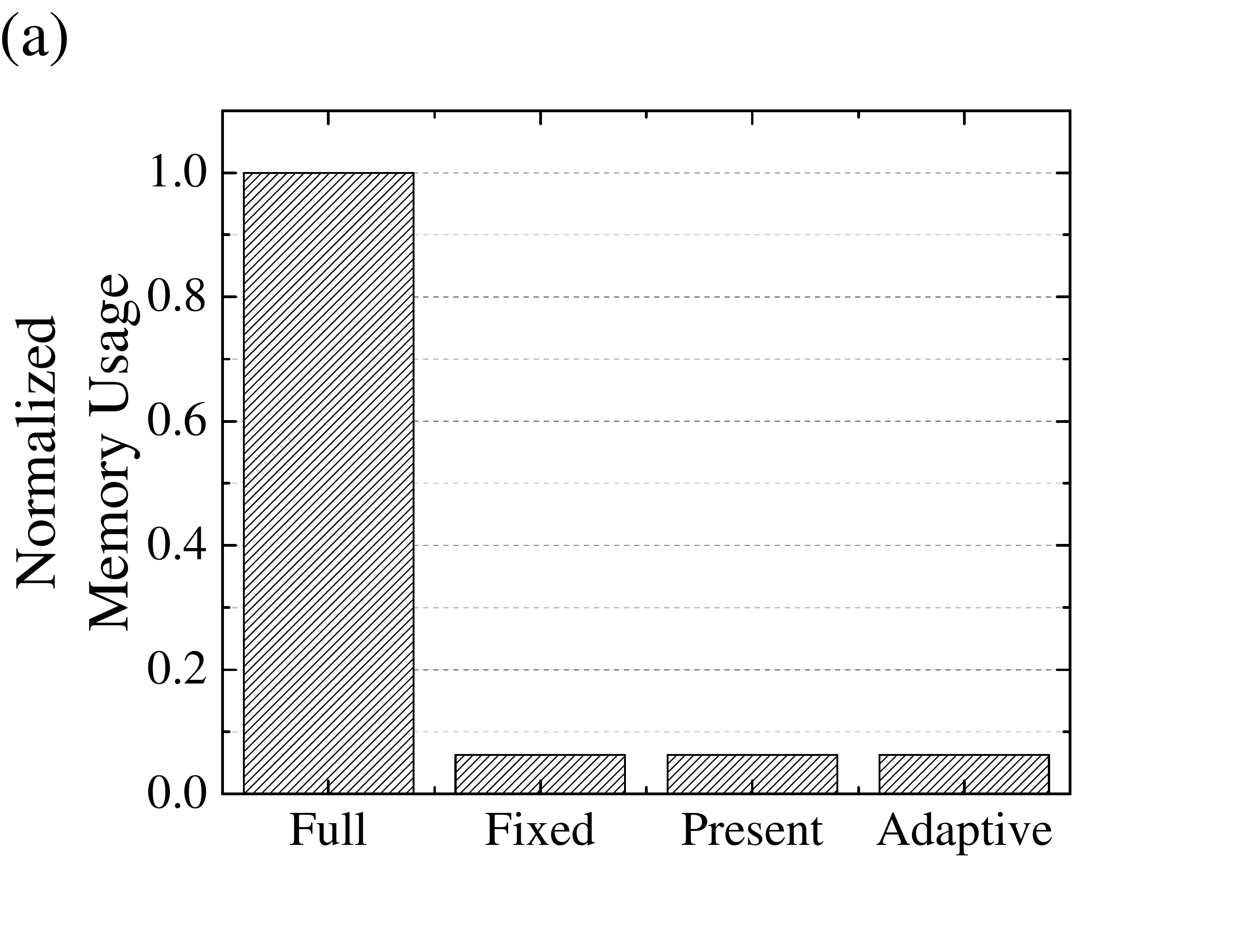}
	\includegraphics[width=0.48\linewidth]{./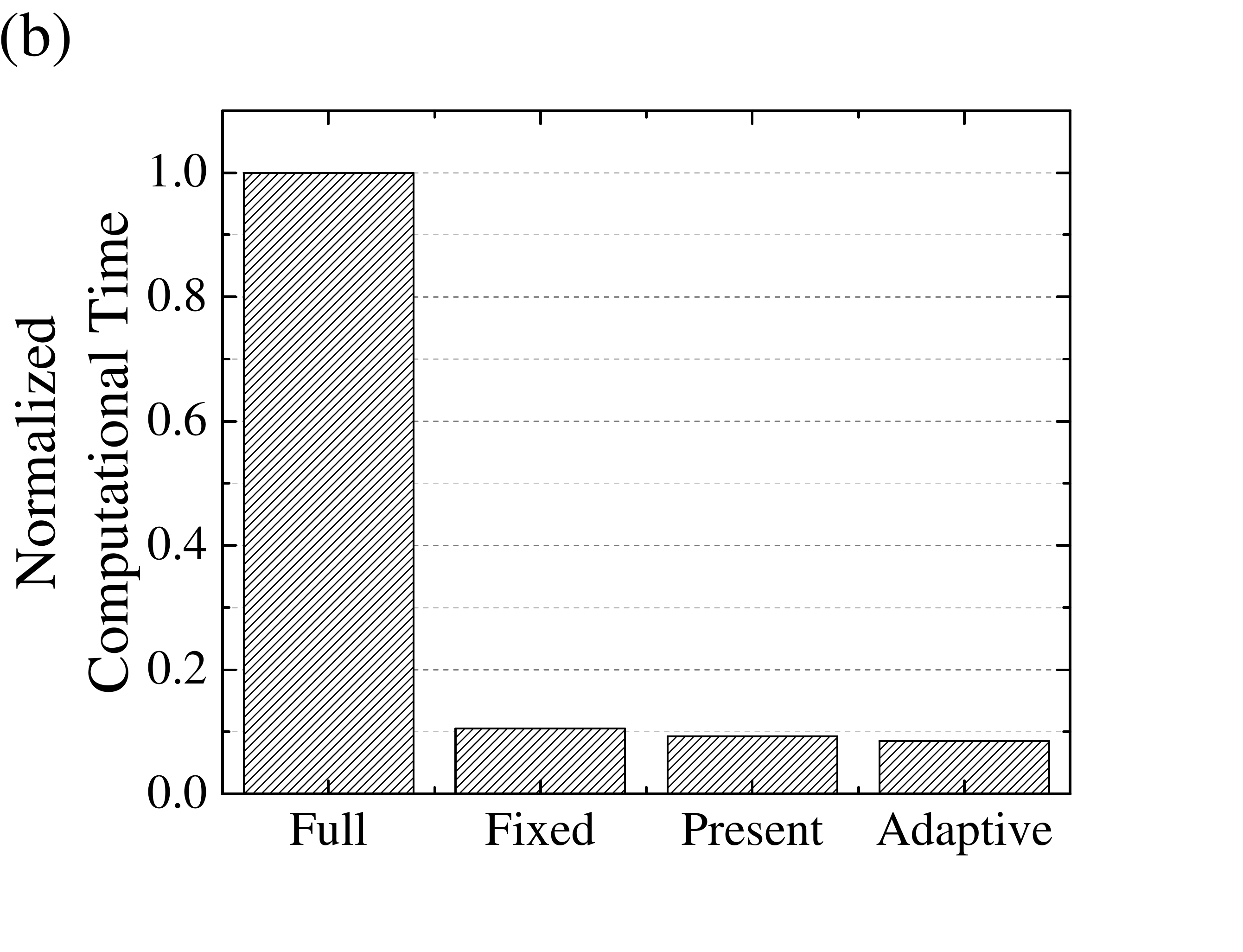}
	\caption{Computational costs of each memory method for the time-fractional sub-diffusion equation at $t=12.8$. (a) Normalized computational memory required for each method, and (b) normalized computational time required for each method.}
	\label{fig:figure6}
\end{figure}
\begin{figure}
	\centering
	\includegraphics[width=0.45\linewidth]{./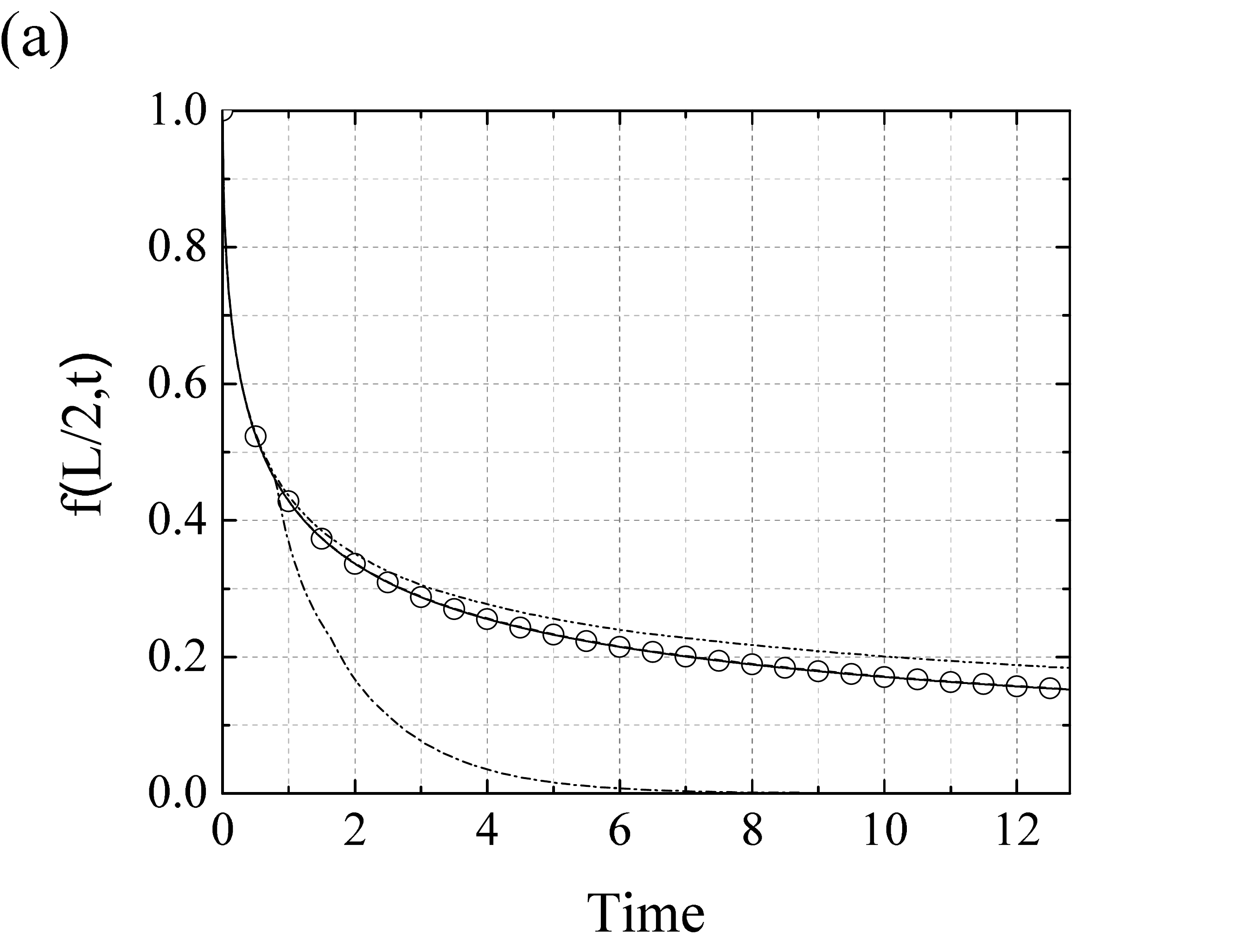}
	\includegraphics[width=0.45\linewidth]{./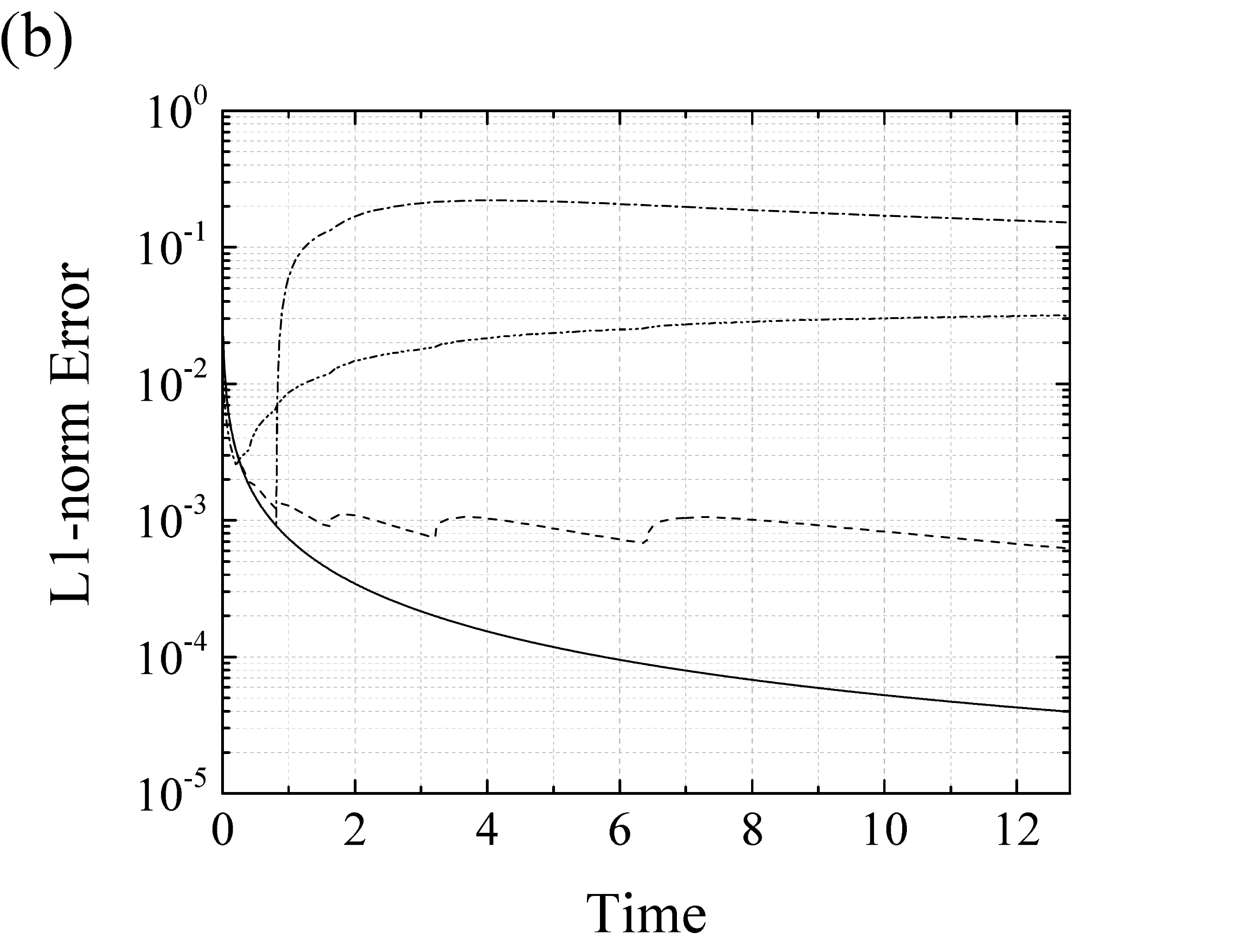}
	\caption{Numerical results of each memory method for the time-fractional sub-diffusion equation with $\alpha=0.5$. (a) Exact and numerical solutions at $x=L/2$ as a function of time and (b) L1-norm error as a function of time. {\large{$\circ$}}, the analytic solution; {------}, the full memory method; {--~--~--}, the present adaptive memory method; {--~$\cdot$~--}, the fixed memory method; {--~$\cdot$~$\cdot$~--}, the adaptive memory method of MacDonald \textit{et al.}.}
	\label{fig:figure7}
\end{figure}
Using the analytic solution of a time-fractional diffusion equation, the accuracy and computational benefits of each method are compared quantitatively. The full memory method, the fixed memory method, and the present adaptive memory method are applied to Eq.~(\ref{eqn:eq55}), and the adaptive memory method of MacDonald~\textit{et al.} is applied to Eq.~(\ref{eqn:eq57}) with $\alpha=0.5$, $L=10$, $\Delta x=0.1$, $\Delta t=0.01$, and $\mu=(L/\pi)^2$. The two adaptive memory methods have their own memory length $T=0.1$, then corresponding number of time points are allowed to be stored in the fixed memory method for fair comparison. Fig.~\ref{fig:figure6}(a) shows the maximum memory usage of each method until $t=12.8$. The fixed and adaptive memory methods are found to reduce the memory usage to less than 10\% of the memory usage for the full memory method. Consequently, the computational time is also reduced in a similar order as shown in Fig.~\ref{fig:figure6}(b). However, the calculated function values show noticeable difference. The fixed memory method starts to severely underestimate the function values after its memory length. The adaptive memory method of MacDonald~\textit{et al.} also overestimates the values after $t=2T$. The full memory method and the present adaptive memory method show much better estimation compared to the exact solution as shown in Fig.~\ref{fig:figure7}(a). Again, Fig.~\ref{fig:figure7}(b) shows L1-norm errors for each method as a function of time in detail. The superior accuracy of the present adaptive memory method is revealed compared to other methods clearly except for the full memory method which is impractically expensive. Interestingly, errors of the present memory method and the adaptive memory method of MacDonald~\textit{et al.} show little bumps because the maximum length of neighbor time points is extended by the power-law algorithm at $t=2^lT,~l\in\mathbb{N}$.

\begin{figure}
	\centering
	\includegraphics[width=0.45\linewidth]{./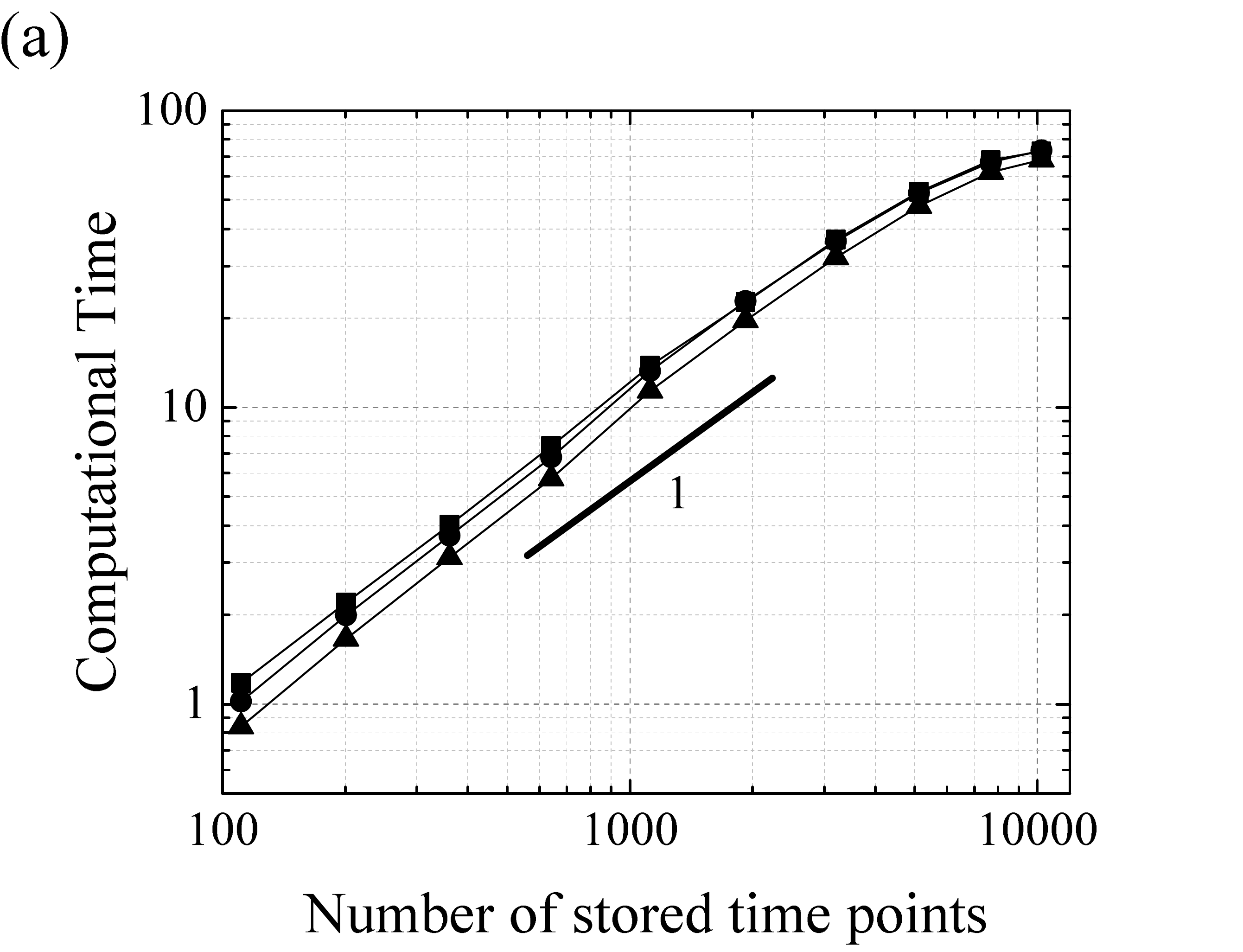}
	\includegraphics[width=0.45\linewidth]{./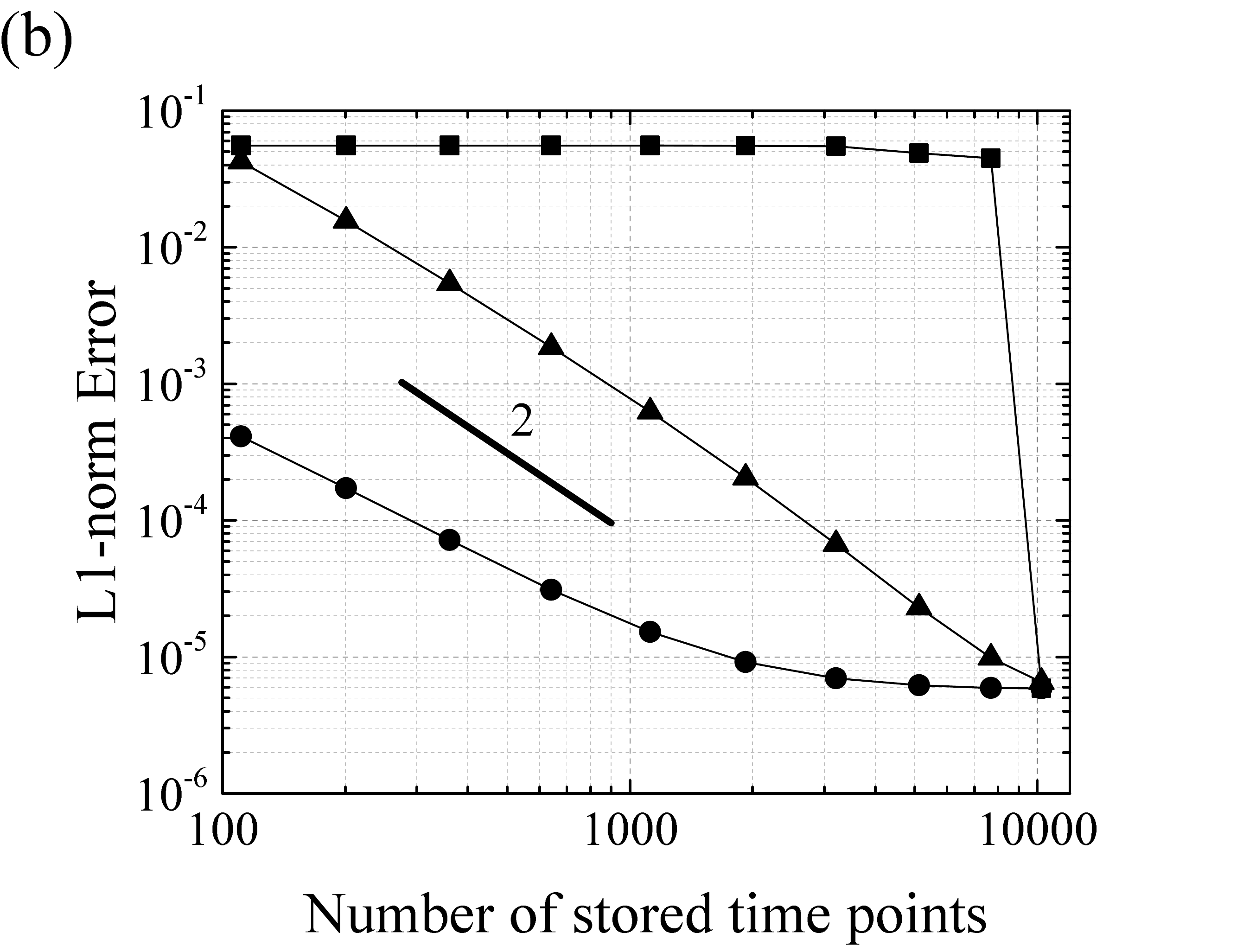}
	\caption{Numerical efficacy assessed by changing the number of stored time points with $\alpha=0.5$ at $t=102.4$. (a) Computational time for each method and (b) L1-norm error for each method. {\large{$\bullet$}}, the present adaptive memory method; {\small{$\blacksquare$}}, the fixed memory method; {\small{$\blacktriangle$}}, the adaptive memory method of MacDonald \textit{et al.}.}
	\label{fig:figure8}
\end{figure}
For further investigation of efficacy of the present adaptive memory method, the computational time and the L1-norm error for each method are measured according to the number of stored time points. The same time-fractional sub-diffusion equation is solved until $t=102.4$. Then, the estimated function value and the computational time are compared for each method with various memory lengths for changing memory requirement of stored time points. Fig.~\ref{fig:figure8}(a) represents the relation between the computational time and the number of stored time points. All three methods require similar computational time, which is linear to the number of stored time points. However, in terms of accuracy, the present memory method preserve the accuracy much better than other methods when the number of stored time points is decreased as represented in Fig.~\ref{fig:figure8}(b).

\section{A fractional Kelvin-Voigt model}
\label{sec:sec6}
\begin{figure}
	\centering
	\includegraphics[width=0.495\linewidth]{./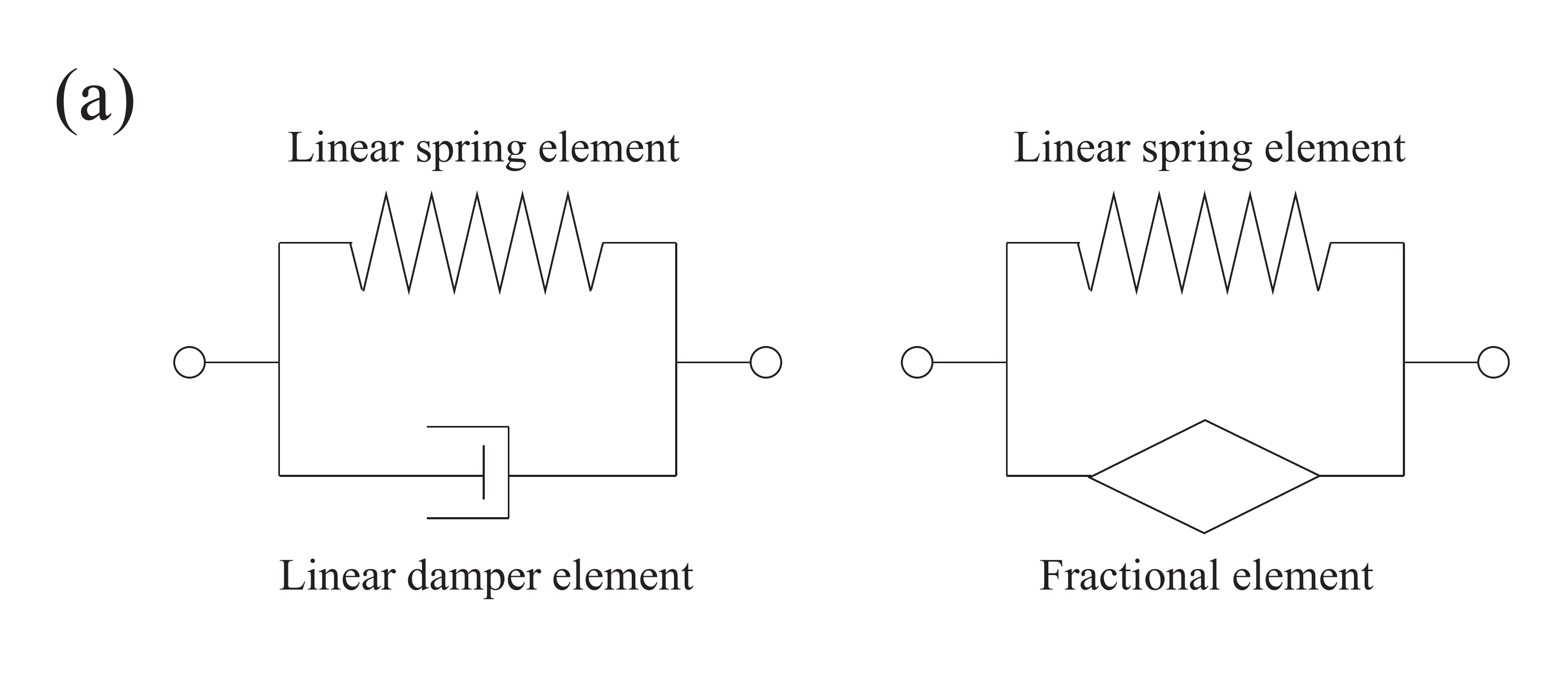}
	\includegraphics[width=0.495\linewidth]{./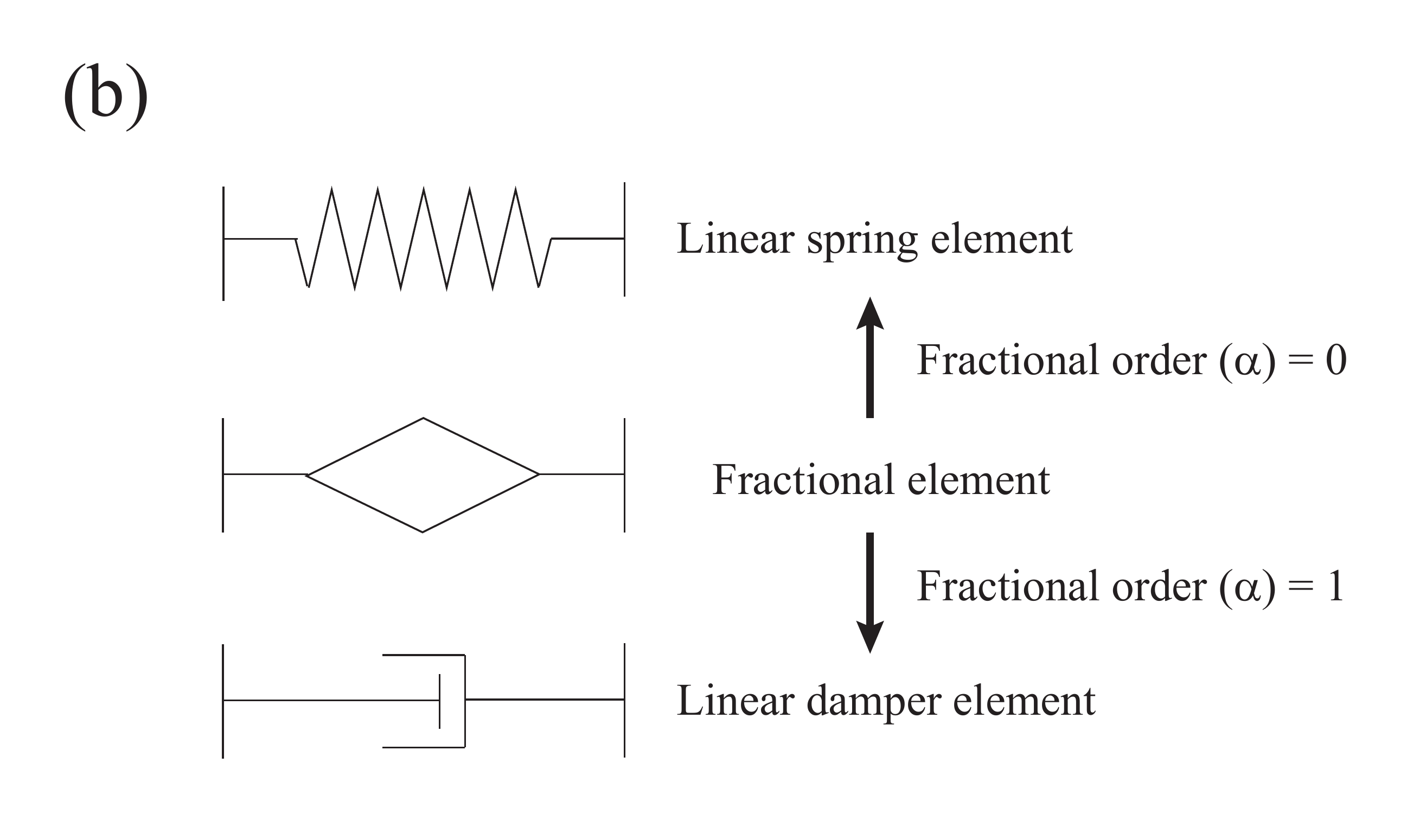}
	\caption{(a) Schematic illustration of the classical Kelvin-Voigt model (left) and the fractional Kelvin-Voigt model (right), and (b) the limiting property of the fractional element, which is reduced to a linear spring when $\alpha=0$ and a linear damper when $\alpha=1$.}
	\label{fig:figure9}
\end{figure}
Fractional viscoelasticity is one of the major applications of the fractional calculus. In the classical viscoelasticity theory, a Kelvin-Voigt model is considered to be one of the simplest models describing linear viscoelastic behaviors. The model is composed of a linear spring and a viscous damper in a parallel connection as shown in the left side of Fig.~\ref{fig:figure9}(a). When a constant loading is applied to the Kelvin-Voigt model, it shows time delayed elongation as an exponential function of time. 

To introduce the fractional calculus, the first derivative of the damper in the Kelvin-Voigt model is replaced with a fractional derivative of the fractional element for $0<\alpha<1$ as shown in the right side of Fig.~\ref{fig:figure9}(a). This fractional element is reduced to a linear spring element when $\alpha=0$ and a linear damper element when $\alpha=1$ as shown in Fig.~\ref{fig:figure9}(b). Then, the constitutive equation for a fractional Kelvin-Voigt model with a constant loading $f$ is defined as follows:
\begin{equation}
\eta\frac{\mathrm{d}^\alpha x(t)}{\mathrm{d}t^\alpha}+kx(t)=f,
\label{eqn:eq61} 
\end{equation}
where $\eta$ and $k$ is a damping constant and a spring constant, respectively and $x(t)$ is the elongation length as a function of time.
Using the Laplace transformation of the Caputo fractional derivative (see Corollary~\ref{cor:cor1}) with an initial condition $x(0)=0$, Eq.~(\ref{eqn:eq61}) is transformed as follows:
\begin{equation}
\eta s^\alpha X(s)+k X(s)=f/s.
\label{eqn:eq62}
\end{equation}
By letting $\tau^\alpha=\eta/k$, the above equation is recast as follows:
\begin{equation}
X(s)=\frac{f}{k}\left(\frac{1}{s}-\frac{s^{\alpha-1}}{s^\alpha+\tau^{-\alpha}}\right).
\label{eqn:eq63}
\end{equation}
Applying inverse Laplace transformation~\cite{Mathai2008}
\begin{equation}
\mathcal{L}^{-1}\left(\frac{s^{\alpha-1}}{s^\alpha+\lambda}\right)=E_\alpha(-\lambda t^\alpha),
\label{eqn:eq64}
\end{equation}
an analytic solution for the creep response of a fractional Kelvin-Voigt model is determined as follows:
\begin{equation}
x(t)=\frac{f}{k}\left[1-E_\alpha(-(t/\tau)^\alpha)\right],
\label{eqn:eq65}
\end{equation}
where Mittag-Leffler function $E_\alpha$ is defined as an infinite sum:
\begin{equation}
E_\alpha(z)=\sum_{k=0}^{\infty}\frac{z^k}{\Gamma(\alpha k +1)}.
\label{eqn:eq66}
\end{equation}
Note that Eq.~(\ref{eqn:eq65}) recovers the creep response of a classical Kelvin-Voigt model for $\alpha=1$.

\begin{figure}
	\centering
	\includegraphics[width=0.48\linewidth]{./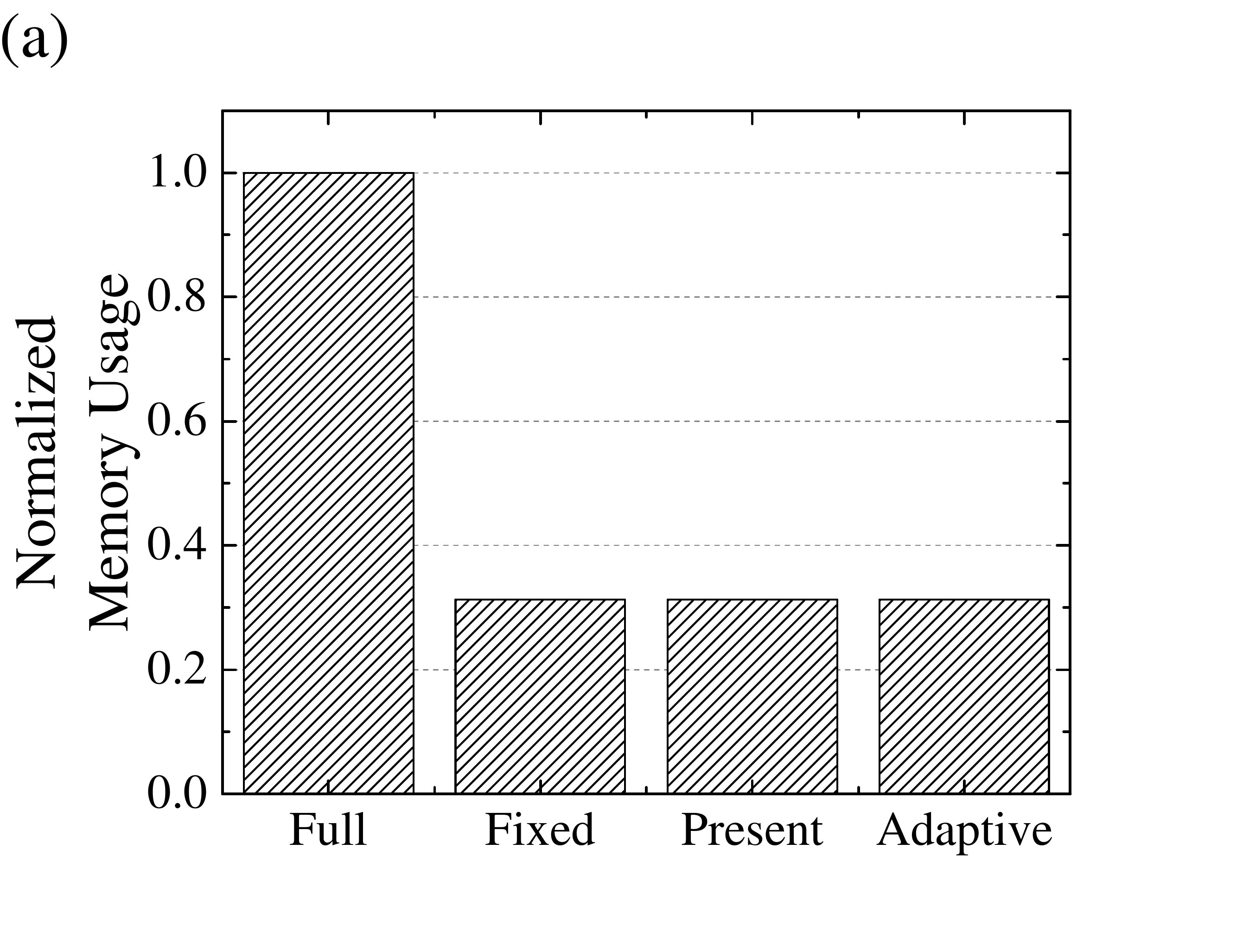}
	\includegraphics[width=0.48\linewidth]{./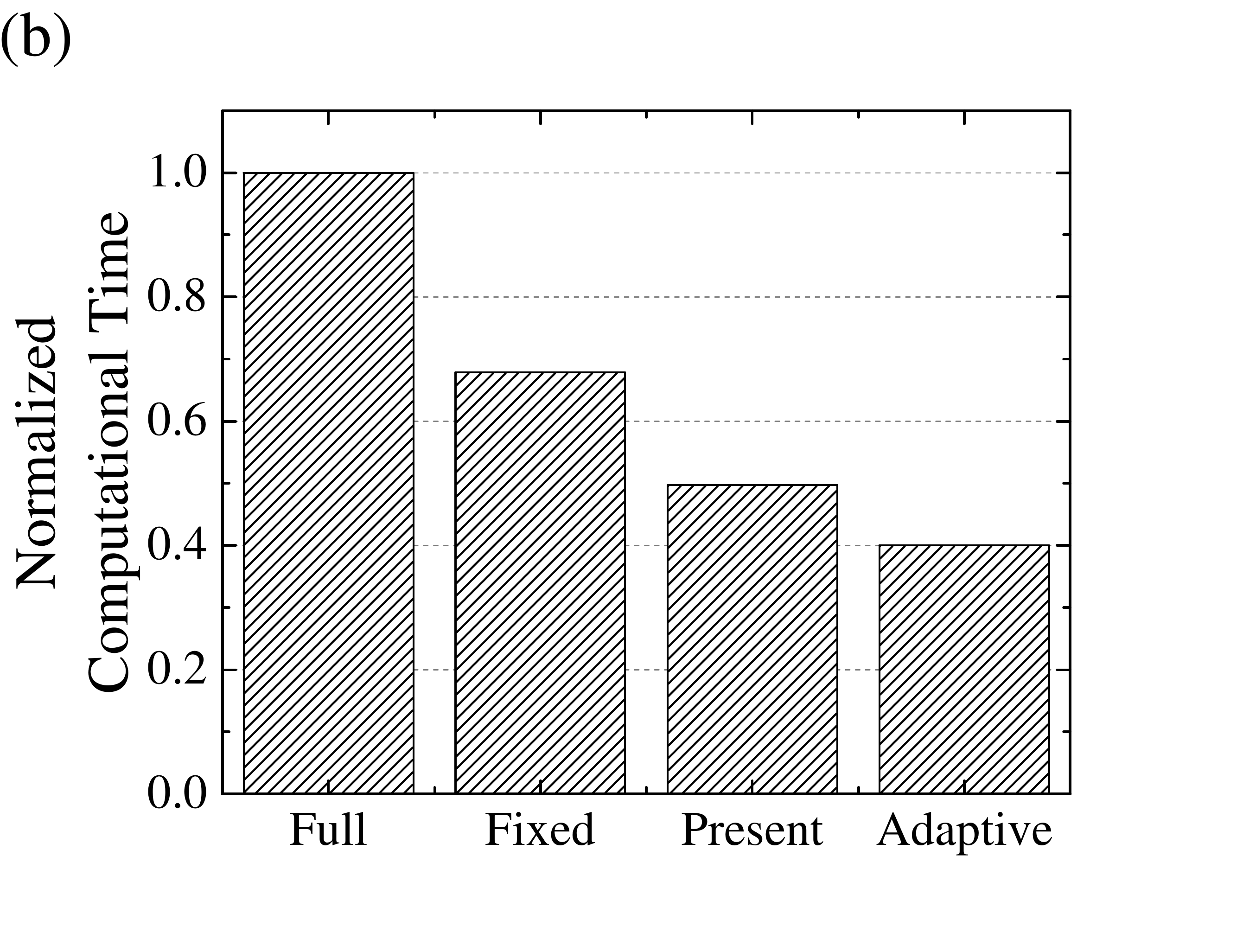}
	\caption{Computational costs of each memory method for the simulation of creep response predicted by the fractional Kelvin-Voigt model at $t=16$. (a) Normalized computational memory required for each method and (b) normalized computational time required for each method.}
	\label{fig:figure10}
\end{figure}
$\Delta t$, $T$, $f$, $\eta$, $k$, and $\alpha$ are set to $0.01\mathrm{s}$, $1\mathrm{s}$, $1\mathrm{N}$, $1\mathrm{N\cdot s^{\alpha}/m}$, $1\mathrm{N/m}$, and $0.5$, respectively. 
The corresponding number of time points are allowed to be stored in the fixed memory method for fair comparison with the results of two adaptive memory methods. Fig.~\ref{fig:figure10}(a) shows the maximum memory usage of each method until $t=16$. The fixed memory and two adaptive memory methods require the same amount of memory, which is a third of requirement for the full memory method. Consequently, the computational time is also reduced similarly as shown in Fig.~\ref{fig:figure10}(b).

\begin{figure}
	\centering
	\includegraphics[width=0.45\linewidth]{./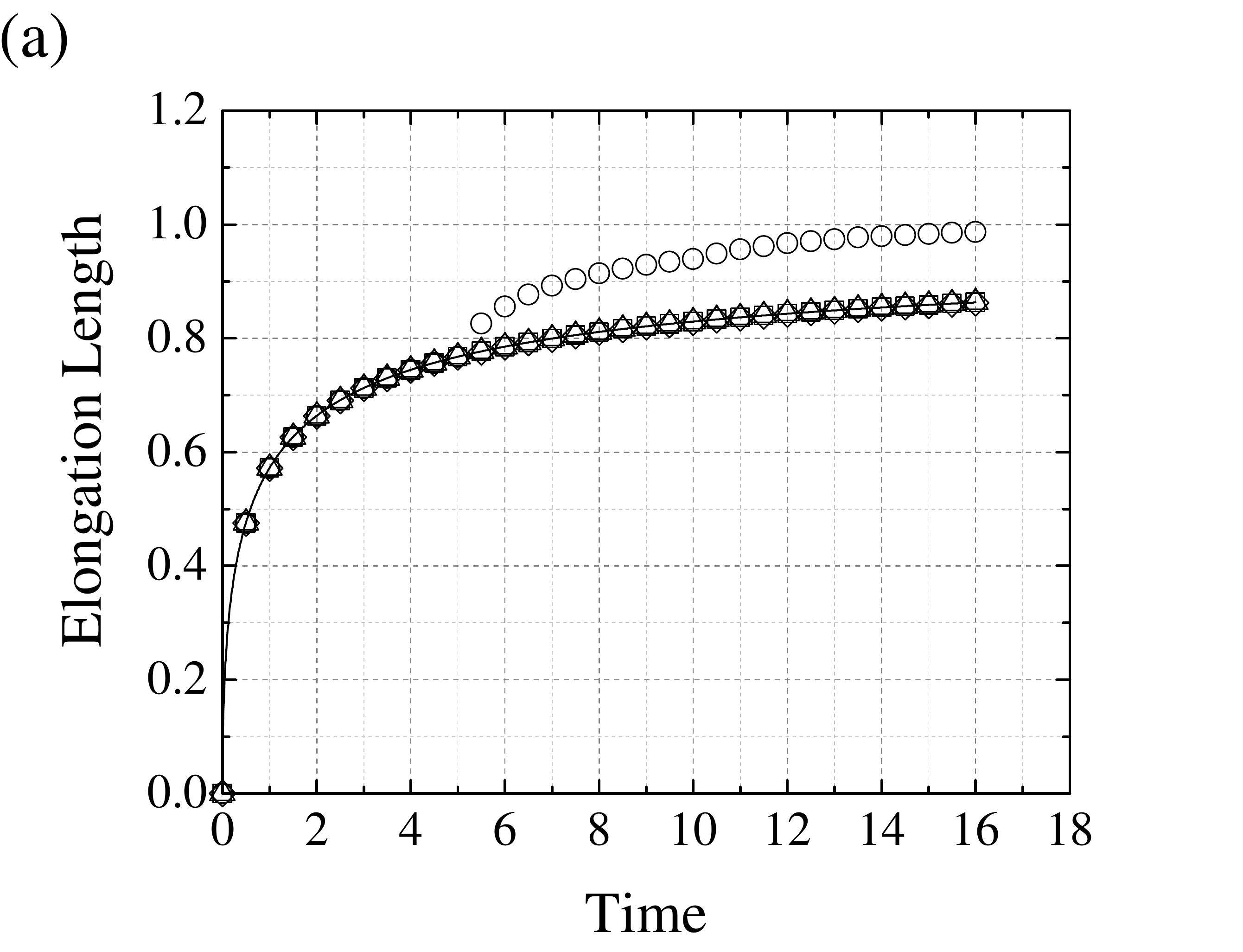}
	\includegraphics[width=0.45\linewidth]{./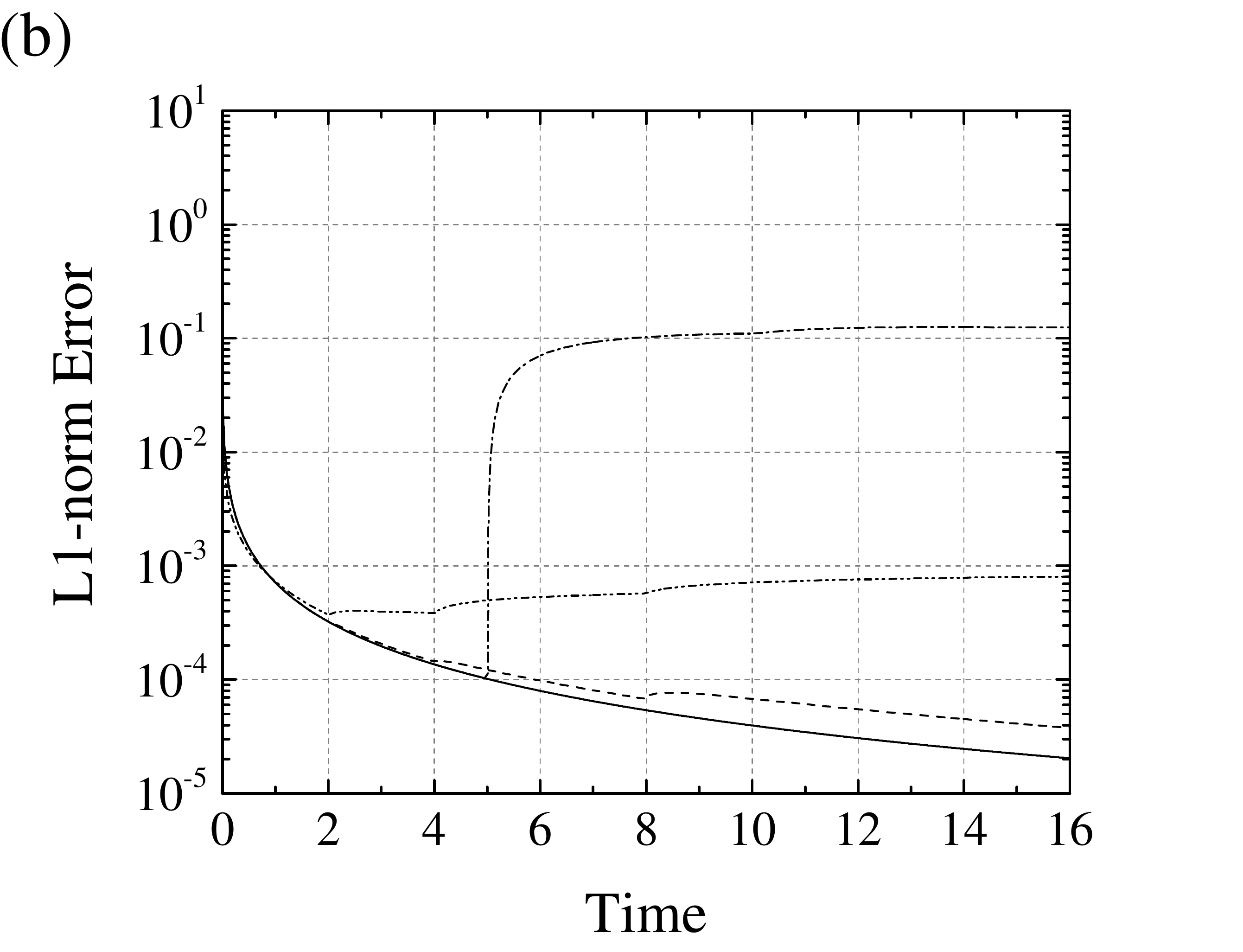}
	\caption{Simulation results for the creep response predicted by the fractional Kelvin-Voigt model. (a) Elongation length with $\alpha=0.5$ as a function of time. {------}, analytic solution; {$\triangle$}, solution by the present adaptive memory method; {\large{$\circ$}}, solution by the fixed memory method; {$\Diamond$}, solution by the adaptive memory method of MacDonald \textit{et al.} \cite{MacDonald2015}; and {$\square$}, solution by the full memory method, and (b) L1-norm errors as a function of time. {------}, the full memory method; {--~--~--}, the present adaptive memory method; {--~$\cdot$~--}, the fixed memory method; and {--~$\cdot$~$\cdot$~--}, the adaptive memory method of MacDonald \textit{et al.} \cite{MacDonald2015}.}
	\label{fig:figure11}
\end{figure}
Fig.~\ref{fig:figure11}(a) shows the elongation length predicted by each method as a function of time with comparison with the analytic solution. 
The full memory and two adaptive memory methods are found to well predict the creep response of a fractional Kelvin-Voigt model, while the result predicted by the fixed memory method deviates from the analytic solution after the memory length. 
Fig.~\ref{fig:figure11}(b) shows L1-norm errors of numerical methods as a function of time. 
Obviously, the full memory method shows the most accurate result with a smooth decaying curve along time, although the full memory method requires lots of memory and computational cost. 
The error of the fixed memory method is found to be drastically increased since earlier time points start to be truncated after the memory length. Meanwhile, the error of the adaptive memory method of MacDonald {\it et al.} starts deviating away from the error of the full memory method after $t=2T$ because the convolution weight is inaccurately approximated to account for eliminated points, which degenerates the accuracy. 
However, the present adaptive method shows a notably reduced error which is also very close to that of the full memory method. 
The error of the present method gradually decreases with little bumps when the maximum length between time points increases at $t=2^l {T}, l\in\mathbb{N}$. 

\section{Summary}
\label{sec:sec7}
In the present study, a cost effective and accurate new numerical method for the Caputo fractional derivative has been developed. The present method is based on a novel adaptive memory treatment \cite{MacDonald2015} with the L1 scheme. The present numerical method significantly reduces the amount of computational memory usage and operation counts while achieves high accuracy. Unlike the full memory method, the present method stores time points on a power-law distribution and eliminates unnecessary time points to reduce memory usage. The present method achieves better accuracy by obtaining accurate convolution weights for non-uniformly distributed time points, which is not allowed in the previous adaptive memory method.

In order to analyze the accuracy of the present adaptive memory method, the L1-norm error has been analytically evaluated. The order of accuracy for the present method is $2-\alpha$ order in terms of length between time points. Thus, the total error increases to $2-\alpha$ order along the increments of the maximum length between time points. Also, error behaviors in terms of $\Delta t$ are analytically and numerically investigated. Interestingly, the order of accuracy is found to change in time from $\mathcal{O}(\Delta t^{2-\alpha})$ to $\mathcal{O}(\Delta t^{2})$.

As practical problems, the sub-diffusion process of a time fractional diffusion equation and the creep response of a fractional Kelvin-Voigt model have been simulated and compared with the results of other methods. The present method proves successful reduction of memory requirement and computational cost while providing numerical accuracy much better than the fixed and other adaptive memory methods.

\appendix
\section{$B(m,\alpha) \approx c(\alpha)m^{-\alpha}$}
\label{app:app1}
The function $B(m,\alpha)$ in Eq.~(\ref{eqn:eq34}) can be approximated as $c(\alpha)m^{-\alpha}$ where $c(\alpha)$ is a proportional function for $0<\alpha<1$.
\begin{equation}
\begin{split}
{ B(m,\alpha)=\sum_{k=0}^{m-1}(2m-k)^{1-\alpha}\{2(2m-k-1)+\alpha\}} \\
{-(2m-k-1)^{1-\alpha}\{2(2m-k)-\alpha\}}
\end{split}
\label{app1:eq1}
\end{equation}
By letting $p=2m-k$, Eq.~(\ref{app1:eq1}) is rewritten as follows:
\begin{equation}
{B(m,\alpha)=\sum_{p=m+1}^{2m}p^{1-\alpha}\{2(p-1)+\alpha\}-(p-1)^{1-\alpha}(2p-\alpha)}.
\label{app1:eq2}
\end{equation}
Using a Taylor series expansion, $(p-1)^{1-\alpha}$ is expanded as follows:
\begin{equation}
\begin{split} 
(p-1)^{1-\alpha}=& p^{1-\alpha}-(1-\alpha)p^{-\alpha}-\frac{1}{2}(1-\alpha)\alpha p^{-\alpha-1}-\frac{1}{6}(1-\alpha)\alpha(\alpha+1)p^{-\alpha-2}\\
&-\frac{1}{24}(1-\alpha)\alpha(\alpha+1)(\alpha+2)p^{-\alpha-3}-\mathcal{O}(p^{-\alpha-4}).
\end{split}
\label{app1:eq3}
\end{equation}
By substituting Eq.~(\ref{app1:eq3}) into Eq.~(\ref{app1:eq2}), Eq.~(\ref{app1:eq2}) is recast in terms of $p$ as follows:
\begin{equation}
{B(m,\alpha)=\frac{1}{6}\alpha(1-\alpha)(2-\alpha)\sum_{p=m+1}^{2m}p^{-\alpha-1}+\mathcal{O}(p^{-\alpha-2})}.
\label{app1:eq4}
\end{equation}
The summation in Eq.~(\ref{app1:eq4}) has a following inequality:
\begin{equation}
\begin{split}
&{\sum_{p=m+1}^{2m}(2m)^{-\alpha-1}<\sum_{p=m+1}^{2m} p^{-\alpha-1}<\sum_{p=m+1}^{2m}m^{-\alpha-1}}, \\
&{(2^{-\alpha-1})m^{-\alpha}<\sum_{p=m+1}^{2m} p^{-\alpha-1}<m^{-\alpha}}.   
\end{split}
\label{app1:eq5}   
\end{equation}
Therefore, the function $B(m,\alpha)$ is approximated with a proportional function $c(\alpha)$ as follows:
\begin{equation}
{B(m,\alpha) \approx c(\alpha)m^{-\alpha}}.
\label{app1:eq6}
\end{equation}

\section*{Acknowledgments}
This research was supported by the Basic Science Research Program of the National Research Foundation of Korea (NRF) funded by the Ministry of Science, ICT and Future Planning (NRF-2015R1A2A1A15056086 and NRF-2014R1A2A1A11049599).

\bibliographystyle{siamplain}
\bibliography{references}

\begin{thebibliography}{10}

\bibitem{Craiem2010}
{\sc D.~Craiem and R.~Magin}, {\em Fractional order models of viscoelasticity
  as an alternative in the analysis of red blood cell ({RBC}) membrane
  mechanics}, Physical {B}iology, 7 (2010), p.~13001.

\bibitem{Dao2003}
{\sc M.~Dao, C.~Lim, and S.~Suresh}, {\em Mechanics of the human red blood cell
  deformed by optical tweezers}, Journal of the {M}echanics and {P}hysics of
  {S}olids, 51 (2003), pp.~2259--2280.

\bibitem{Ford2001}
{\sc N.~Ford and A.~Simpson}, {\em The numerical solution of fractional
  differential equations: Speed versus accuracy}, Numerical {A}lgorithms, 26
  (2001), pp.~333--346.

\bibitem{Langlands2005}
{\sc T.~Langlands and B.~Henry}, {\em The accuracy and stability of an implicit
  solution method for the fractional diffusion equation}, Journal of
  {C}omputational {P}hysics, 205 (2005), pp.~719--736.

\bibitem{Li2015}
{\sc C.~Li and F.~Zeng}, {\em Numerical methods for fractional calculus}, CRC
  {P}ress, Boca {R}aton, 2015.

\bibitem{Lopez-Fernandez2008}
{\sc M.~Lopez-Fernandez, C.~Lubich, and A.~Schadle}, {\em Adaptive, fast, and
  oblivious convolution in evolution equations with memory}, {SIAM} {J}ournal
  on {S}cientific {C}omputing, 30 (2008), pp.~1015--1037.

\bibitem{Loverro2004}
{\sc A.~Loverro}, {\em Fractional calculus: history, definitions and
  applications for engineer}, tech. report, Department of {A}erospace and
  {M}echanical {E}ngineering, {U}niversity of {N}otre {D}ame, USA, 2004.

\bibitem{Lubich1988}
{\sc C.~Lubich}, {\em Convolution quadrature and discretized operational
  calculus. {I}}, Numerische {M}athematik, 52 (1988), pp.~129--145.

\bibitem{Lubich2002}
{\sc C.~Lubich and A.~Schadle}, {\em Fast convolution for nonreflecting
  boundary conditions}, {SIAM} {J}ournal on {S}cientific {C}omputing, 24
  (2002), pp.~161--182.

\bibitem{MacDonald2015}
{\sc C.~MacDonald, N.~Bhattacharya, B.~Sprouse, and G.~Silva}, {\em Efficient
  computation of the {G}r{\"u}nwald-{L}etnikov fractional diffusion derivative
  using adaptive time step memory}, Journal of {C}omputational {P}hysics, 297
  (2015), pp.~221--236.

\bibitem{Mathai2008}
{\sc A.~Mathai and H.~Haubold}, {\em Special functions for applied scientists},
  Springer, New York, 2008.

\bibitem{Mills2004}
{\sc J.~Mills, L.~Qie, M.~Dao, C.~Lim, and S.~Suresh}, {\em Nonlinear elastic
  and viscoelastic deformation of the human red blood cell with optical
  tweezers}, Molecular and {C}ellular {B}iomechanics, 1 (2004), pp.~169--180.

\bibitem{Oldham2006}
{\sc K.~Oldham and J.~Spanier}, {\em The fractional calculus: {T}heory and
  applications of differentiation and integration to arbitrary order}, Elsevier
  {S}cience, 2006.

\bibitem{Podlubny1999}
{\sc I.~Podlubny}, {\em Fractional differential equations: an introduction to
  fractional derivatives, fractional differential equations, to methods of
  their solution and some of their applications}, Academic Press, San Diego,
  1999.

\bibitem{Puig2007}
{\sc M.~{Puig-de-Morales-Marinkovic}, K.~Turner, J.~Butler, J.~Fredberg, and
  S.~Suresh}, {\em Viscoelasticity of the human red blood cell}, Am {J}
  {P}hysiol {C}ell {P}hysiol, 293 (2007), pp.~597--605.

\bibitem{Sebaa2006}
{\sc N.~Sebaa, Z.~Fellah, W.~Lauriks, and C.~Depollier}, {\em Application of
  fractional calculus to ultrasonic wave propagation in human cancellous bone},
  Signal {P}rocessing, 86 (2006), pp.~2668--2677.

\bibitem{Selhuber-Unkel2009}
{\sc C.~Selhuber-Unkel, P.~Yde, K.~Berg-Sorensen, and L.~Oddershede}, {\em
  Variety in intracellular diffusion during the cell cycle}, Physical
  {B}iology, 6 (2009), p.~025015.

\bibitem{Suresh2005}
{\sc S.~Suresh, J.~Spatz, J.~Mills, A.~Micoulet, M.~Dao, C.~Lim, M.~Beil, and
  T.~Seufferlein}, {\em Connections between single-cell biomechanics and human
  disease states: gastrointestinal cancer and malaria}, Acta {B}iomaterialia, 1
  (2005), pp.~15--30.

\bibitem{Yoon2016}
{\sc D.~Yoon and D.~You}, {\em Continuum modeling of deformation and
  aggregation of red blood cells}, Journal of {B}iomechanics, 49 (2016),
  pp.~2267--2279.

\bibitem{Yoon2008}
{\sc Y.~Yoon, J.~Kotar, G.~Yoon, and P.~Cicuta}, {\em The nonlinear mechanical
  response of the red blood cell}, Physical {B}iology, 5 (2008), p.~036007.

\bibitem{Yuste2012}
{\sc S.~Yuste and J.~Quintana-Murillo}, {\em A finite difference method with
  non-uniform timesteps for fractional diffusion equations}, Computer {P}hysics
  {C}ommunications, 183 (2012), pp.~2594--2600.

\end{thebibliography}

\end{document}